\documentclass{article}
\usepackage[english]{babel}
\usepackage[utf8]{inputenc}
\usepackage[T1]{fontenc}
\usepackage{mathtools}   
\usepackage{amsmath}
\usepackage{amssymb}
\usepackage{amsfonts}
\usepackage{amsthm}
\usepackage{graphicx}
\usepackage{enumerate}
\usepackage{comment}
\usepackage{geometry}
\usepackage{color}
\usepackage[color,matrix,arrow]{xy}
\usepackage{xy}
\input xy
\xyoption{all}
\geometry{left=2.9cm, right=2.6cm, top=2.9cm, bottom=2.6cm}
\parskip 8pt

\newtheorem{theorem}{Theorem}[section]
\newtheorem{lemma}[theorem]{Lemma}
\newtheorem{proposition}[theorem]{Proposition}

\theoremstyle{definition}
\newtheorem{definition}[theorem]{Definition}

\newtheorem{remark}[theorem]{Remark}

\begin{document}
	
\title{The coarse Novikov conjecture for extensions of coarsely embeddable groups
\thanks{Supported in part by NSFC (No. 11831006, 11771143).}}
	
\author{Qin Wang and Yazhou Zhang}
\date{}
	
\maketitle
	
\begin{abstract}
Let $(1\to N_n\to G_n\to Q_n \to 1)_{n\in \mathbb{N}}$ be a sequence of extensions of countable discrete groups. 
Endow $(G_n)_{n\in \mathbb{N}}$ with  metrics associated to proper length functions on $(G_n)_{n\in \mathbb{N}}$ respectively such that the sequence of metric spaces  $(G_n)_{n\in \mathbb{N}}$ have uniform bounded geometry. We show that if $(N_n)_{n\in \mathbb{N}}$ and $(Q_n)_{n\in \mathbb{N}}$ are coarsely embeddable into Hilbert space, then the coarse Novikov conjecture holds for the sequence $(G_n)_{n\in \mathbb{N}}$, which may not admit a coarse embedding into Hilbert space.
\end{abstract}
	
\section{Introduction}

M. Gromov introduced the notion of coarse embeddings of metric spaces into Hilbert space, and suggested that finitely generated discrete groups that are coarsely embeddable into Hilbert space, when viewed as metric spaces, might satisfy the Novikov conjecture \cite{Grom93}. G. Yu proved that
this is indeed the case \cite{Yu00}. Recall that a map $f: X\to H$ from a metric space $X$ to a Hilbert space $H$ is a {\em coarse embedding} if there exist two non-decreasing functions $\rho_1, \rho_2 :[0,\infty) \rightarrow [0,\infty)$ such that
$\lim\limits_{t\rightarrow \infty}\rho_i(t)=+\infty$ for $i=1, 2$ and such that
$\rho_1(d(x,y))\leq \|f(x)-f(y)\|\leq \rho_2(d(x,y))$
for all $x,y \in X$.
\par

 Let $(1\to N_n\to G_n\to Q_n \to 1)_{n\in \mathbb{N}}$ be a sequence of extensions of countable discrete groups. For each $n\in \mathbb{N}$, let $l_{G_n}$ be a proper length function on $G_n$. We can define a family of left invariant
 metrics $d_{G_n}$ on $G_n$ by $d_{G_n}(g, h)=l_{G_n}(g^{-1}h)$. Let $\pi: G_n\to Q_n$ be the quotient map. Define length functions on 
 $N_n$ and $Q_n$ according to
 $l_{N_n}(g)=l_{G_n}(g)$ for all $g\in N_n$, and
 $l_{Q_n}(x)=\min\{l_{G_n}(g): g\in G_n \mbox{ and } \pi(g)=x \}$
 for all $x\in Q_m$. Then these length functions induce left invariant metrics $d_{N_n}$ and $d_{Q_n}$ on $N_n$ and $Q_n$, respectively. It follows that we have three sequences discrete metric spaces $(N_n,d_{N_n})_{n\in \mathbb{N}}$, $(G_n,d_{G_n})_{n\in \mathbb{N}}$ and $(Q_n,d_{Q_n})_{n\in \mathbb{N}}$. The sequence of metric spaces $(G_n,d_{G_n})_{n\in \mathbb{N}}$ is said to have uniform bounded geometry if for every $r>0$, the number of the elements in $B_{G_n}(x,r)=\{y\in G_n: d_{G_n}(x,y)\leq r\}$ is at most $N_r$ for some $N_r>0$ independent of $n$. The sequence of metric spaces $(N_n,d_{N_n})_{n\in \mathbb{N}}$ is said to be coarsely embeddable into Hilbert space if there exists a family of maps $f_n: N_n \rightarrow H_n$, and two non-decreasing functions $\rho_1, \rho_2 :[0,\infty) \rightarrow [0,\infty)$ such that
 $\lim\limits_{t\rightarrow \infty}\rho_i(t)=+\infty$ for $i=1, 2$ and
 $\rho_1(d_{N_n}(x,y))\leq \|f_n(x)-f_n(y)\|\leq \rho_2(d_{N_n}(x,y))$
 for all $x,y \in N_n$. 

\par
The main purpose of this paper is to prove the following result:
\par
\begin{theorem}\label{main result}
Let $(1\to N_n\to G_n\to Q_n \to 1)_{n\in \mathbb{N}}$ be a sequence of extensions of countable discrete groups. Endow $(G_n)_{n\in \mathbb{N}}$ with  metrics associated to proper length functions on $(G_n)_{n\in \mathbb{N}}$ respectively such that the sequence of metric spaces  $(G_n)_{n\in \mathbb{N}}$ have uniform bounded geometry.    If $(N_n)_{n\in \mathbb{N}}$ and $(Q_n)_{n\in \mathbb{N}}$ are coarsely embeddable into Hilbert space, then the coarse Novikov conjecture holds for the sequence $(G_n)_{n\in \mathbb{N}}$.	
\end{theorem}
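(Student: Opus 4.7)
The plan is to adapt the Dirac/dual-Dirac method to this extension setting, following the twisted Roe-algebra machinery that G. Yu developed in his proof of the coarse Baum--Connes conjecture for coarsely embeddable spaces. The overall strategy is to factor the coarse assembly map
\[
\mu:\varinjlim_{d}K_{*}\bigl(P_{d}(\sqcup_{n}G_{n})\bigr)\longrightarrow K_{*}\bigl(C^{*}(\sqcup_{n}G_{n})\bigr)
\]
through the $K$-theory of an auxiliary twisted Roe-type $C^{*}$-algebra $C^{*}_{\mathrm{tw}}(\sqcup_{n}G_{n})$ built out of both coarse embeddings $f_{n}^{N}:N_{n}\to H_{n}^{N}$ and $f_{n}^{Q}:Q_{n}\to H_{n}^{Q}$. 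I would then prove (i) that a twisted version of the assembly is an isomorphism, and (ii) that the natural ``untwisting'' $\ast$-homomorphism $C^{*}(\sqcup_{n}G_{n})\to C^{*}_{\mathrm{tw}}(\sqcup_{n}G_{n})$ composed with $\mu$ agrees under the Bott comparison with the twisted assembly. Together these force $\mu$ to be split injective, which is precisely the coarse Novikov conjecture for $(G_{n})_{n\in\mathbb{N}}$.

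To construct the twisted algebra I would first fix set-theoretic sections $s_{n}:Q_{n}\to G_{n}$ of the quotient maps, so that each $g\in G_{n}$ decomposes uniquely as $g=s_{n}(\pi(g))\nu(g)$ with $\nu(g)\in N_{n}$. This lets me read $f_{n}^{Q}$ as a ``base'' embedding and $f_{n}^{N}$ as a ``fibre-wise'' embedding along cosets of $N_{n}$, and then combine them with the graded Bott element of $\mathcal{S}=C_{0}(\mathbb{R})$ and the Clifford-algebra-valued Bott elements attached to the two Hilbert spaces, in the spirit of the Higson--Kasparov--Trout construction. Uniform bounded geometry of $(G_{n})_{n\in\mathbb{N}}$ is used to ensure that the resulting propagation estimates and finite-dimensional cut-offs are uniform in $n$, so that the twisted algebra sits naturally inside a matrix amplification of the Roe algebra of the coarse disjoint union. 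The twisted assembly is then shown to be an isomorphism by a two-step infinite-dimensional Bott-periodicity argument: one first collapses the $H_{n}^{N}$-direction fibre-wise using the coarse embedding of $N_{n}$, and then collapses the remaining $H_{n}^{Q}$-direction using the coarse embedding of $Q_{n}$, together with a Mayer--Vietoris / five-lemma reduction that cuts $G_{n}$ along cosets of $N_{n}$.

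The main obstacle I expect is the compatibility between the two embeddings: a priori the sections $s_{n}$ and the conjugation action of $G_{n}$ on $N_{n}$ are not controlled by $f_{n}^{N}$, so an element of $P_{d}(G_{n})$ at scale $d$ mixes different cosets of $N_{n}$ in a way that is not Lipschitz for the fibre-wise embedding; indeed, this is exactly why $(G_{n})_{n\in\mathbb{N}}$ itself need not be coarsely embeddable. Resolving this is precisely what forces the twisted algebra to carry both Bott elements simultaneously: the excess distortion caused by moving between fibres must be absorbed by the $H_{n}^{Q}$-component, and showing that this can be done with uniform control across the sequence $(G_{n})_{n\in\mathbb{N}}$, and with propagation going to zero in the appropriate fibre-wise sense, is the core technical estimate of the paper. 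Once this compatibility is set up, the untwisting map in (ii) is essentially evaluation at zero of the Bott elements and is tautologically compatible with assembly on both sides, so injectivity of $\mu$ drops out of the resulting commutative diagram.
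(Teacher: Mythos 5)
Your overall machinery is the right one (twisted Roe algebras, Higson--Kasparov--Trout Bott periodicity, Mayer--Vietoris reduction), but your architecture differs from the paper's in one crucial respect, and that difference is where your plan runs into the obstacle you yourself flag as ``the core technical estimate.'' You propose to twist simultaneously by both coarse embeddings $f_n^N:N_n\to H_n^N$ and $f_n^Q:Q_n\to H_n^Q$, building a single twisted algebra that carries both Bott elements, and then to collapse the two Hilbert-space directions in sequence. The paper twists \emph{only} by the quotient embedding: the coefficient algebra $\mathcal{A}(V_m)$ in $C_u^*((P_d(G_m),\mathcal{A}(V_m))_{m})$ is built entirely from $f_m:G_m/N_m\to H_m$. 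The coarse embeddability of $(N_m)_m$ is never turned into a Bott element at all; it is used only at the final step, after the Mayer--Vietoris decomposition and a strong Lipschitz homotopy have reduced the twisted evaluation map to the ordinary coarse Baum--Connes problem for the sequence $(N_m)_m$ with non-twisted coefficients $\prod_i\mathcal{A}(V_m)_{O_{m,r}(i)}$, at which point Yu's theorem for coarsely embeddable spaces applies verbatim. Concretely: condition~(6) of the twisted algebra localizes the support over $R$-neighborhoods of cosets $x_i^m N_m$; a linear homotopy (uniform in $m$ and $i$ because the Rips-complex metric is homogeneous) collapses $P_d(\mathcal{N}_{G_m}(x_i^m N_m,R))$ onto $P_d(x_i^m N_m)\cong P_d(N_m)$; and what remains is untwisted CBCC for $(N_m)_m$.

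This matters because the incompatibility you identify --- that the section $s_n$ and the conjugation action of $G_n$ on $N_n$ are not controlled by $f_n^N$, so a fibrewise embedding along $N_n$-cosets is not coarsely well-defined over $G_n$ --- is not a technicality to be ``absorbed by the $H_n^Q$-component''; it is the fundamental reason $(G_n)$ need not be CE, and nothing in your sketch explains how the $Q$-Bott element compensates for it. There is no estimate in sight that would make the two-step collapse of the double twist go through, and indeed the paper deliberately avoids needing one. Your plan, as stated, therefore has a genuine gap exactly at the step you call the main obstacle: you should instead twist only by the quotient embedding, prove the twisted evaluation isomorphism (Theorem 4.1 of the paper) by the cosets-plus-strong-Lipschitz-homotopy reduction to $(N_m)_m$, and then run the Bott map argument $\beta_{u,L}$ from the untwisted localization algebra into the twisted one. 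The coarse embeddability of $(N_m)_m$ enters solely as the hypothesis of Yu's theorem, never as a second twisting datum.
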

\par
The coarse Novikov conjecture is the injectivity part of the coarse Baum-Connes conjecture \cite{HR93,Roe93,Yu95}, and a geometric analogue of the strong Novikov conjecture \cite{BCH94}. In the case of a noncompact
complete Riemannian manifold, the coarse Novikov conjecture provides an algorithm to determine when the higher index of an elliptic operator on the
noncompact complete Riemannian manifold is nonzero. In particular, it implies that the higher index of the Dirac operator on a uniformly contractible
Riemannian manifold is nonzero. By Proposition 4.33 in \cite{Roe93}, the coarse Novikov conjecture implies Gromov's conjecture that a uniformly contractible Riemannian manifold with bounded geometry cannot have uniformly positive scalar curvature, and the zero-in-the-spectrum conjecture stating that the Laplace operator acting on the space of all $L^2$-forms of a uniformly contractible Riemannian manifold has zero in its spectrum.
See \cite{Yu06} for a comprehensive survey on the coarse Novikov conjecture.
\par
A remarkable progress was achieved by G. Yu \cite{Yu00} who proved the coarse Baum--Connes conjecture, and consequently, the coarse Novikov conjecture for metric space with bounded geometry which admits a coarse embedding into Hilbert space.  A fundamental idea underlining the
approach in \cite{Yu00} is that the index of a Dirac operator is more computable if the Dirac operator is twisted by a family of
“almost flat Bott bundles”. This approach inspires several later progresses on the coarse Novikov conjecture for coarse embeddings into
certain Banach spaces \cite{KY06,CWY15} or non-positively curved manifolds \cite{Gong-Wu-Yu20,ShanWang06}. See also \cite{Deng20,Gong-Wang-Yu08,KY12,OOY-09,Yu98} for closely related developments.
\par
In a recent paper, G. Arzhantseva and R. Tessera \cite{Arzhantseva-Tessera2018/2019} answer in the negative a long standing open question raised
in \cite{Dadarlat-Guentner 2003,Guentner-Kaminker 2004}: Does coarse embeddability into Hilbert space is preserved under group extensions of
 finitely generated groups? Their counterexamples are certain restricted permutational wreath products $\mathbb{Z}_2\wr_G H$ and
$\mathbb{Z}_2\wr_G (H\times \mathbb{F}_n)$, where $G$ is a {\em Gromov monster group} \cite{Grom03, Arzhantseva-Osajda,Osajda20}, i.e.,
a finitely generated group which contains in its Cayley graph an isometrically embedded expander \cite{Grom03}, and $H$ is a {\em Haagerup monster group} \cite{Arzhantseva-Osajda,Osajda20}, i.e., a finitely generated group with the Haagerup property but without Yu's property A \cite{Yu00,Nowak-Yu 2012}.
The reason why the groups $\mathbb{Z}_2\wr_G H$ and $\mathbb{Z}_2\wr_G (H\times \mathbb{F}_n)$ do not coarsely embed into Hilbert space is
that both groups contain isometrically in their Cayley graphs a {\em relative expander}, an innovative notion introduced by G. Arzhantseva and
R. Tessera in \cite{Arzhantseva-Tessera2015}. Consequently, the group $\mathbb{Z}_2\wr_G H$ or
$\mathbb{Z}_2\wr_G (H\times \mathbb{F}_n)$ also provide the first example of an extension of countable discrete groups or
an extension of finitely generated groups which
does not coarsely embed into any $L^p$-space for any $1\leq p<\infty$, nor into any uniformly curved Banach space, and yet does not contain
any weakly embedded expander \cite{Arzhantseva-Tessera2015}.
\par
Very recently, J. Deng, Q. Wang and G. Yu \cite{DWY21} show that if a sequence of group extensions
$( 1\to N_n\to G_n\to Q_n\to 1 )_{n\in \mathbb{N}}$
has {\em "A-by-CE" structure}, namely, the coarse disjoint union \cite{Nowak-Yu 2012} of the sequence
$\left( N_n \right)_{n\in \mathbb{N}} $ with the induced metric from the word metrics of
$\left( G_n \right)_{n\in \mathbb{N}} $ has Yu's property A, and the coarse disjoint union of the sequence $\left( Q_n \right)_{n\in \mathbb{N}} $ with the quotient metrics
coarsely embeds into Hilbert space (denoted briefly, {\em CE}), then the coarse Baum-Connes conjecture holds for the coarse disjoint union of the sequence $(G_n)_{n\in \mathbb{N}}$.
It follows that the coarse Baum-Connes conjecture, and hence the coarse Novikov conjecture, holds for the relative expanders
in \cite{Arzhantseva-Tessera2015}, the group extensions in \cite{Arzhantseva-Tessera2018/2019} mentioned above (see \cite{Braga-Chung-Li 2020,Oyono-Oyono 2001} for an alternative proof for these group extensions), and certain box spaces of free groups in \cite{Delabie-Khukhro},
which do not coarsely embed into Hilbert space and yet do not coarsely contain any weakly embedded expander. In \cite{DWY21}, J. Deng, Q. Wang and G. Yu raise an open problem: Does "CE-by-CE" imply the coarse Baum-Connes conjecture? 

Our main result Theorem \ref{main result} provides an answer to the injectivity part of this problem. Namely, "CE-by-CE" implies the coarse Novikov conjecture. Moreover, we have the following result associated to \cite[Theorem 3.13]{DWY21}, which is related to relative expanders.

\begin{theorem}\label{theorem 1.2}
	Let $(1\to N_n\to G_n\to Q_n \to 1)_{n\in \mathbb{N}}$ be a sequence of extensions of finite groups with uniformly finite generating subsets. Endow  $(N_n)_{n\in \mathbb{N}}$ with the induced metrics from the word metrics of $(G_n)_{n\in \mathbb{N}}$ and $(Q_n)_{n\in \mathbb{N}}$ with the quotient metrics from $(G_n)_{n\in \mathbb{N}}$,  respectively.  If the coarse disjoint unions $(N_n)_{n\in \mathbb{N}}$ and $(Q_n)_{n\in \mathbb{N}}$ are coarsely embeddable into Hilbert space, then the coarse Novikov conjecture holds for the coarse disjoint  union $(G_n)_{n\in \mathbb{N}}$.	
\end{theorem}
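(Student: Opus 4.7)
The plan is to deduce Theorem \ref{theorem 1.2} from Theorem \ref{main result} by verifying that the hypotheses of the latter are consequences of the hypotheses of the former, and then reconciling the conclusion for the ``sequence'' $(G_n)_{n\in\mathbb{N}}$ with the conclusion for the ``coarse disjoint union'' $\sqcup_n G_n$.

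First, I would verify \emph{uniform bounded geometry}. The uniformly finite generating subsets $S_n \subset G_n$ bound the radius-$r$ word ball by $|B_{G_n}(e,r)| \leq \sum_{j=0}^{r}(2k)^j$, where $k = \sup_n |S_n|$, a bound independent of $n$. Second, I would verify that the \emph{metrics coincide}: left invariance of $d_{G_n}$ shows that the restriction $d_{G_n}|_{N_n\times N_n}$ equals the metric $d_{N_n}$ built from $l_{G_n}|_{N_n}$, and a short coset computation identifies the quotient metric on $Q_n$ from $G_n$ with the metric $d_{Q_n}$ built from $l_{Q_n}(x) = \min\{l_{G_n}(g):\pi(g) = x\}$. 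Third, I would transfer \emph{coarse embeddability}: a coarse embedding $f : \sqcup_n N_n \to \mathcal{H}$ with control functions $\rho_1, \rho_2$ restricts on each component $N_n$ to a map $f_n$ still controlled by the same $\rho_1, \rho_2$, which is exactly the notion of uniform coarse embeddability of a sequence used in Theorem \ref{main result}; the same reasoning transfers the hypothesis on $(Q_n)_{n\in\mathbb{N}}$.

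With these three checks, Theorem \ref{main result} yields the coarse Novikov conjecture for the sequence $(G_n)_{n\in\mathbb{N}}$. Since each $G_n$ is finite and the sequence has uniform bounded geometry, this is exactly the coarse Novikov conjecture for the coarse disjoint union $\sqcup_n G_n$ in the standard formulation: the Roe algebra, localization algebra, and coarse $K$-homology groups associated to such a sequence are assembled from precisely the same data as those of $\sqcup_n G_n$ equipped with any admissible gap metric between the components.

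\emph{The main obstacle} I foresee is not mathematical but definitional: one must confirm that the assembly map whose injectivity is asserted by Theorem \ref{main result} is genuinely the coarse assembly map of $\sqcup_n G_n$ in the sense of \cite{DWY21}. In the frameworks of Yu and of Deng--Wang--Yu this identification is routine, but it should be stated explicitly so that Theorem \ref{theorem 1.2} is recognized as the clean ``CE-by-CE'' translation of Theorem \ref{main result} into the coarse disjoint union language of the open problem posed in \cite{DWY21}.
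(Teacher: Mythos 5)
Your proposal is correct and follows the route the paper leaves implicit: the paper states Theorem~\ref{theorem 1.2} without a separate argument, presenting it as a companion to Theorem~\ref{main result}, and your three reductions (uniformly finite generating sets give uniform bounded geometry; the induced metric on $N_n$ and the quotient metric on $Q_n$ coincide with the metrics built from $l_{G_n}$ as in the setup of Theorem~\ref{main result}; restriction of a coarse embedding of the coarse disjoint union to each block yields a uniformly controlled family of embeddings of the sequence) are exactly what is needed.

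The one place to be slightly more careful — and you rightly flag it yourself — is the last reconciliation. The coarse Novikov conjecture ``for the sequence'' as defined in Section~2 concerns injectivity of $e_*\colon K_*(C_{u,L}^*((P_d(G_n))_n))\to K_*(C_{u}^*((P_d(G_n))_n))$, whereas the conclusion of Theorem~\ref{theorem 1.2} is phrased for the single proper metric space $\sqcup_n G_n$ and its ordinary Roe and localization algebras. These are not literally equal: for a coarse disjoint union, an operator of finite propagation is block-diagonal only up to an operator supported on finitely many pairs of blocks, so $C^*(P_d(\sqcup_n G_n))$ equals $C^*_u((P_d(G_n))_n)$ modulo compacts, not on the nose. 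The passage between the two formulations is a standard six-term-sequence diagram chase comparing the extensions by the compact ideal (and the corresponding statement for localization algebras), and it is routine; but it is a genuine, if small, mathematical step rather than a purely ``definitional'' one, and in a final write-up it deserves a sentence and a reference (e.g. to the treatment of coarse disjoint unions in Willett--Yu or in \cite{DWY21}) rather than being declared an obstacle to investigate.
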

As a special case of Theorem \ref{main result}, we hve the following result for a group extension.
\begin{theorem}
	 Let $1\to N\to G\to Q \to 1 $ be a short exact sequence of countable discrete groups. If $N$ and $Q$ are coarsely embeddable into Hilbert space, then the coarse Novikov conjecture holds for $G$. 
\end{theorem}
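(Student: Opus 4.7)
The plan is to derive this statement as an immediate corollary of Theorem \ref{main result} applied to the constant sequence of extensions, that is, by taking $N_n := N$, $G_n := G$, $Q_n := Q$ for every $n\in\mathbb{N}$.

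First, I would fix a proper length function $l_G$ on $G$, which exists since $G$ is a countable discrete group, and endow $N$ and $Q$ with the induced length functions $l_N, l_Q$ defined in the Introduction. The constant sequence $(G_n, d_{G_n})_{n\in\mathbb{N}}$ then has uniform bounded geometry: since $d_G$ is proper and left-invariant, each ball $B_G(x,r)$ has cardinality $|B_G(e,r)|$, a bound that is plainly independent of $n$. Properness of $l_N$ is inherited from $l_G$ because $\{g\in N : l_N(g)\le R\}\subseteq\{g\in G : l_G(g)\le R\}$ is finite, and similarly for $l_Q$ via the quotient map.

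Second, I would verify that the constant sequences $(N_n)_{n\in\mathbb{N}}$ and $(Q_n)_{n\in\mathbb{N}}$ are coarsely embeddable into Hilbert space in the sense required by Theorem \ref{main result}. Given coarse embeddings $f_N : N\to H_N$ and $f_Q : Q\to H_Q$ with distortion functions $\rho_1,\rho_2$, the constant families $(f_N)_{n\in\mathbb{N}}$ and $(f_Q)_{n\in\mathbb{N}}$ satisfy the uniform coarse embedding condition with the same $\rho_1,\rho_2$. Here one only needs the standard fact that coarse embeddability of a countable discrete group is intrinsic, i.e., independent of the choice of proper left-invariant metric, so the given coarse embeddings may be taken with respect to $d_N$ and $d_Q$ themselves.

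Third, Theorem \ref{main result} applied to this data yields the coarse Novikov conjecture for the constant sequence $(G_n)_{n\in\mathbb{N}}$. I would conclude by noting that, for a constant sequence, the coarse Novikov conjecture in the sequence formulation (phrased via the coarse disjoint union $\bigsqcup_{n\in\mathbb{N}}G$ used throughout this paper and in \cite{DWY21}) specializes to the coarse Novikov conjecture for the single group $G$: each copy of $G$ sits inside $\bigsqcup_{n\in\mathbb{N}}G$ as a coarsely disjoint isometric summand, and injectivity of the coarse assembly map is inherited along such an inclusion by functoriality.

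There is essentially no obstacle: all the hypotheses reduce trivially to those of Theorem \ref{main result}, and the only point deserving a moment's care is the final functoriality remark that passes from the sequence formulation back to the classical coarse Novikov conjecture for a single bounded-geometry group.
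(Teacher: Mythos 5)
Your proposal is correct and takes precisely the route the paper intends: the paper states this result without separate proof as ``a special case of Theorem~\ref{main result},'' which is exactly your application of the main theorem to the constant sequence $N_n=N$, $G_n=G$, $Q_n=Q$. Your verification that a left-invariant proper length metric automatically gives (uniform) bounded geometry and that the constant coarse embeddings satisfy the uniform distortion hypotheses, together with the splitting of the constant-coordinate inclusion $C^*(P_d(G))\hookrightarrow C^*_u((P_d(G))_{n})$ against the evaluation-at-one-coordinate map to pass from the sequence formulation back to the single group, is the standard and intended argument.
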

 Note that J. Deng \cite{Deng20} has proved that the strong Novikov conjecture with arbitrary coefficient G-C*-algebras holds for CE-by-CE groups. His result bascially covers this case because G. Yu \cite{Yu95Baun-Connes} proved that the Baum--Connes conjecture for a discrete group $G$ with coefficients in $\ell^\infty(G,K(H))$  is equivalent to the coarse Baum-Connes conjecture for $G$ as a metric space with a length metric. Here our paper provides a different method for this special case.

The paper is organized as follows. In Section 2, we briefly recall the concept of the Roe algebra, the coarse Novikov conjecture and Yu's localization algebra techniques. In Section 3, we introduce the twisted Roe algebra and the twisted localization algebra for the sequence of extensions of
countable discrete groups associated to the coarse embeddings of the quotient groups $(G_m/N_m)_{m\in \mathbb{N}}$ into a Hilbert spaces. In section 4, we show that the evaluation map from the $K$-theory of the twisted localization algebra to the $K$-theory of the twisted Roe algebra is an isomorphism. In Section 5, we give the constructions of the Bott maps, and show that the Bott map from the $K$-theory of the uniform localization algebra to the $K$-theory of the twisted localization algebra is an isomorphism. Consequently, we prove the main result by recalling the result in the previous section.
\par
We should mention that the surjectivity part of the open question raised in \cite{DWY21} still remains open:
\par
{\bf Problem.} Does "CE-by-CE" imply the {\em surjectivity} of the coarse Baum-Connes assembly map for group extensions?
	
\section{The coarse Novikov conjecture for a sequence of metric spaces}
	
In this section, we shall briefly recall the Roe algebra, Yu's localization algebra and the coarse Novikov conjecture.
\par
Let $X$ be a proper metric space. Recall that a metric space is {\em proper} if every closed bounded subset is compact. An {\em $X$-module} is a separable Hilbert space equipped with a faithful  and non-degenerate $*$-representation $\pi$ of $C_0(X)$, the algebra of all complex-valued continuous functions on $X$ vanishing at infinity. An $X$-module is said to be {\em standard} if no nonzero functions in $C_0(X)$ acts as a compact operator.
When $H_X$ is an $X$-module, for each $f\in C_0(X)$ and $h\in H_X$, we simply denote $(\pi(f))h$ by $fh$.
\par
\begin{definition}\cite{Roe93,Roe96}
Let $X$ be a proper metric space, and let $H_X$ be an $X$-module.
\begin{enumerate}[(1)]
	\item  The {\em support} $\mbox{Supp}(T)$ of a bounded linear operator $T: H_X\to H_X$ is the complement in $X\times X$ of the set of
            points $(x,y) \in X\times X$ for which there exist functions $f\in C_0(X), g\in C_0(X)$ with $f(x)\neq 0, g(y)\neq 0$
            such that $gTf=0$.
	\item The {\em propagation} of a bounded linear operator $T: H_X \to H_X$ is defined to be
            $$\sup\{d(x,y):(x,y) \in \mbox{Supp}(T) \}.$$
            We say that $T$ has {\em finite propagation} if $\mbox{propagation}(T)<\infty.$
	\item A bounded linear operator $T: H_X \rightarrow H_X$ is said to be {\em locally compact} if the operators $fT$ and $Tf$ are compact
            for all $f\in C_0(X).$
\end{enumerate}
\end{definition}
\par
Denote by $\mathbb{C}[X, H_X]$, or simply $\mathbb{C}[X]$, the set of all locally compact, finite propagation operators on $H_X$.
It is straightforward to check that $\mathbb{C}[X]$ is a $*$-algebra which, up to non-canonical isomorphisms, does not depend on the choice of standard non-degenerate $X$-modules \cite{Roe96,Yu97}.
\par
\begin{definition}\cite{Roe93,Roe96}
The {\em Roe algebra} $C^*(X, H_X)$, or simply denoted by $C^*(X)$,  is defined to be the operator norm closure of $\mathbb{C}[X, H_X]$ on
a standard non-degenerate $X$-module $H_X$.
\end{definition}
\par
We may use a specific standard non-degenerate $X$-module. Let $Z$ be a countable dense subset of $X$, and let $H$ be a
fixed separable and infinite dimensional Hilbert space. Let $K:=K(H)$ denote the algebra of compact operators on $H$. The algebra $C_0(X)$ acts on $\ell^2(Z)$ by point-wise multiplications. Thus we can choose $H_X$ to be $\ell^2(Z) \otimes H$, where $C_0(X)$ acts on $H_X$ by $f(\eta \otimes h)=f\eta \otimes h $ for
$f\in C_0(X), \eta \in \ell^2(Z), h\in H$.
\par
\begin{definition}
Define $\mathbb{C}_f[X]$ to be the $*$-algebra of all bounded functions $T:Z\times Z\to {K}:=K(H)$, viewed as $Z\times Z$-matrices, such that: \\
\indent (1) for any bounded subset $B\subset X$, the set $\{(x, y)\in (B\times B)\cap (Z\times Z) | T(x, y)\neq 0 \}$ is finite; \\
\indent (2) there exists $L>0$ such that $\sharp \{y\in Z| T(x, y)\neq 0\}<L$ and $\sharp \{y\in Z|T(y, x)\neq 0\}<L$ for all $x\in Z$; \\
\indent (3) there exists $R\geq 0$ such that $T(x,y)=0$ whenever $d(x,y)>R$ for all $x, y\in Z$.
\end{definition}
\par
Note that $\mathbb{C}_f[X]$ is $*$-isomorphic to $\mathbb{C}[X, \ell^2(Z, H)]$ by a non-canonical $*$-isomorphism \cite{Yu97,Yu00}. 
Hence, we can use $\mathbb{C}_f[X]$ to replace $\mathbb{C}[X]$ to define the Roe algebra $C^*(X)$ of $X$.
\par
\begin{definition}
Let $X$ be a discrete metric space with {\em bounded geometry}, i.e., for any $R>0$ there exists $N>0$ such that the number of elements in 
every ball of radius $R$ is less than $N$. For each $d\geq 0$, the {\em Rips complex $P_d(X)$ at scale $d$} is defined to be the simplicial polyhedron where the set of vertices is $X$, and a finite subset $\{x_0, x_1, \cdots, x_q\}$ in $X$ spans a simplex if and only if
$d(x_i, x_j)\leq d$ for all $0\leq i, j\leq q$.
\end{definition}
\par
Endow $P_d(X)$ with the {\em spherical metric}. A detailed description for the spherical metric on a Rips complex can be found in
\cite[Definition 7.2.8]{WY20}. In particular, for each $d'\geq d\geq 0$, the canonical inclusion
$i_{d'd} : P_d(X) \to P_{d'}(X)$ is a coarse equivalence, and a homeomorphism onto its image \cite[Proposition 7.2.11]{WY20}.
\par
{\bf The coarse Novikov conjecture.} For any discrete metric space $X$ with bounded geometry, the coarse Baum-Connes assembly map
$$\mu: \lim\limits_{d\rightarrow \infty}K_*(P_d(X)) \rightarrow  \lim\limits_{d\rightarrow \infty}K_*(C^*(P_d(X))) \cong K_*(C^*(X))$$
is injective, where $K_*(P_d(X))=KK_*(C_0(P_d(X)), \mathbb{C})$ is the $K$-homology group \cite{Kas75} of the proper metric space $P_d(X)$ for each
$d\geq 0$.
\par
Yu's localization algebra \cite{Yu97} will play an important role in the proof of the main result of this paper.
\par
\begin{definition}\cite{Yu97,WY20} Let $X$ be a proper metric space. Define $\mathbb{C}_L[X]$ to be the $*$-algebra of all bounded and uniformly continuous functions
$$g: [0, \infty) \to \mathbb{C}_{f}[X]\subset C^*(X)$$
such that there exists a bounded function $R: [0, \infty)\to [0, \infty)$ with $\lim_{t\to \infty} R(t)=0$ and such that
$(g(t))(x, y)=0$ whenever $d(x, y)>R(t)$ for all $t\in [0, \infty)$, $x, y\in Z$.
\par
The localization algebra $C_L^*(X)$ is defined to be the norm completion of $\mathbb{C}_L[X]$ with respect to the norm
$\|g\|_\infty := \sup_{t\in [0, \infty)} \|g(t)\|$.
\end{definition}
\par
The $K$-theory of the localization algebras is invariant under strong Lipschitz homotopy invariance \cite{Yu97}.
\par
\begin{definition}\cite{Yu97}
Let $X$ and $Y$ be two proper metric spaces, and let $f$ and $g$ be two Lipschitz maps from $X$ to $Y$.  A continuous homotopy
$F:[0,1]\times X \rightarrow Y$ between $f$ and $g$ is said to be {\em strongly Lipschitz} \cite{Yu97} if \\
\indent (1) there exists $C>0$ such that $d(F(t,x),F(t,y))\leq Cd(x,y)$ for all $x,y\in X$ and $t\in [0,1]$; \\
\indent (2) for any $\varepsilon >0$, there exists $\delta >0$ such that $d(F(t_1,x),F(t_2,x))<\varepsilon$ for all $x\in X$ if
                $|t_1-t_2|<\delta$; \\
\indent (3) $F(0,x)=f(x) \text{ and } F(1,x)=g(x)$ for all $x\in X$.
\par	
A metric space $X$ is said to be {\em strongly Lipschitz homotopy equivalent} to $Y$, if there exist two Lipschitz maps $f: X\rightarrow Y$ and $g:Y\rightarrow X$ such that $f\circ g$ and $g\circ f$ are strongly Lipschitz homotopy equivalent to the identity maps $id_X$ and $id_Y$, respectively.
\end{definition}
\par
\begin{proposition}\cite{Yu97}
If $X$ is strongly Lipschitz homotopy equivalent to $Y$, then $K_*(C_L^*(X))$ is naturally isomorphic to $K_*(C_L^*(Y))$.
\end{proposition}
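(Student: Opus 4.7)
The plan is to assign, to each Lipschitz map $f: X\to Y$, a *-homomorphism $\Phi_f: C_L^*(X)\to C_L^*(Y)$, prove it is functorial on K-theory, and then show that strongly Lipschitz homotopic maps induce the same K-theory map; the proposition then follows by applying this construction to the maps $f$ and $g$ witnessing the homotopy equivalence.

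First I would fix countable dense subsets $Z_X\subset X$ and $Z_Y\subset Y$ modelling the standard modules $\ell^2(Z_X)\otimes H$ and $\ell^2(Z_Y)\otimes H$. Given a Lipschitz $f: X\to Y$ with constant $C$, pick a Borel map $\bar f: Z_X\to Z_Y$ with $d_Y(f(x),\bar f(x))\leq 1$, and construct a covering isometry $V_f: \ell^2(Z_X)\otimes H\to \ell^2(Z_Y)\otimes H$ by distributing the fibres $\bar f^{-1}(y)$ into orthogonal subspaces of $\{y\}\otimes H$, using the infinite dimensionality of $H$. Define $\Phi_f(T)=V_fTV_f^*$. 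If $g\in\mathbb{C}_L[X]$ has propagation $R(t)\to 0$, then $\Phi_f(g(t))$ has propagation at most $C\cdot R(t)+2$, which still tends to $0$; moreover $\Phi_f(g(t))$ remains uniformly continuous in $t$ and locally compact. Hence $\Phi_f$ maps $\mathbb{C}_L[X]$ into $\mathbb{C}_L[Y]$ and extends to the norm closure. A standard argument shows that $(\Phi_f)_*$ is independent of the auxiliary choices and functorial, i.e.\ $(\Phi_f\circ\Phi_h)_*=(\Phi_{f\circ h})_*$.

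Second, I would prove strong Lipschitz homotopy invariance. Let $F:[0,1]\times X\to Y$ be a strongly Lipschitz homotopy between $f$ and $g$, and for each $s\in[0,1]$ let $f_s:=F(s,\cdot)$, which by condition (1) is Lipschitz with the common constant $C$. The goal is to assemble the maps $\Phi_{f_s}$ into a single *-homomorphism $\Phi: C_L^*(X)\to C_L^*(Y)\otimes C[0,1]$ whose evaluations at $s=0$ and $s=1$ recover $\Phi_f$ and $\Phi_g$; this immediately forces $(\Phi_f)_*=(\Phi_g)_*$ on K-theory. The construction uses the localization parameter: at time $t$ an element of $\mathbb{C}_L[X]$ has propagation at most $R(t)\to 0$, and by the uniform continuity clause (2) in the definition of strong Lipschitz homotopy I can choose, depending on $t$, a partition $0=s_0(t)<s_1(t)<\cdots<s_{n(t)}(t)=1$ whose mesh shrinks fast enough relative to $R(t)$ that consecutive covering isometries $V_{f_{s_i}}$ are close in an appropriate sense. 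Linearly interpolating (after a small modification to restore isometry) these $V_{f_{s_i}}$ produces an operator-valued path $s\mapsto W_{s,t}$ that is norm continuous in $s$, and setting $\Phi(a)(s,t):=W_{s,t}a(t)W_{s,t}^*$ yields the desired $\Phi$.

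Combining these two steps: since $g\circ f$ is strongly Lipschitz homotopic to $\mathrm{id}_X$, $(\Phi_g\circ\Phi_f)_*=(\Phi_{g\circ f})_*=(\Phi_{\mathrm{id}_X})_*=\mathrm{id}$, and symmetrically for $f\circ g$, so $(\Phi_f)_*$ is a K-theory isomorphism. The main obstacle is the second step: one must engineer the family $W_{s,t}$ so that it is \emph{simultaneously} norm continuous in $s$ (so that $\Phi$ genuinely lands in $C_L^*(Y)\otimes C[0,1]$) and preserves the defining small-propagation condition of $C_L^*$ uniformly in $s$. The uniform continuity clause (2) of the strong Lipschitz homotopy is precisely what allows these two requirements to be reconciled, by letting the partition mesh in $s$ depend on the localization parameter $t$.
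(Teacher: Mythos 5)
The paper cites this as Yu's theorem without reproducing a proof, so the comparison is against the known argument in \cite{Yu97} (see also Willett--Yu, \emph{Higher Index Theory}, Ch.\ 6). Your first step is standard and correct: a Lipschitz map $f$ of constant $C$ admits a covering isometry $V_f$, conjugation scales propagation by at most $C$, and hence gives a well-defined $*$-homomorphism $\Phi_f: C_L^*(X)\to C_L^*(Y)$; independence of the auxiliary choices on $K$-theory is the usual inner-conjugation argument.

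The second step, however, has a genuine gap. You propose to produce a path $s\mapsto W_{s,t}$ of isometries that is norm continuous in $s$, by ``linearly interpolating (after a small modification to restore isometry)'' the covering isometries $V_{f_{s_i}}$, and then to define $\Phi(a)(s,t)=W_{s,t}\,a(t)\,W_{s,t}^*$ with values in $C_L^*(Y)\otimes C[0,1]$. This cannot work as stated: covering isometries do not vary norm continuously in the map. Even for two maps $f_s,f_{s'}$ that are uniformly $\varepsilon$-close, the isometries $V_{f_s},V_{f_{s'}}$ are generically far apart in operator norm (they redistribute the fibres $\bar{f}^{-1}(y)$ into different orthogonal slots of $\{y\}\otimes H$, and these slot assignments jump discretely as $s$ passes rounding thresholds). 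Linear combinations $\lambda V_{f_{s_i}}+(1-\lambda)V_{f_{s_{i+1}}}$ are not close to being isometries, and there is no canonical ``small modification'' that restores isometry while preserving both norm continuity in $\lambda$ and the finite-propagation covering property. What one \emph{does} get for nearby maps is that $V_{f_s}V_{f_{s'}}^*$ is a finite-propagation partial isometry in the multiplier algebra, which shows the induced maps on $K$-theory agree; but that is a $K$-theoretic statement, not the norm continuity your $C[0,1]$-valued map requires.

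Yu's actual proof circumvents this by never attempting a continuous path of isometries. Instead it uses a reparametrization of the homotopy together with an Eilenberg swindle: one chooses, for each localization time $t$, a sequence of homotopy times $0=s_0(t)\leq s_1(t)\leq\cdots\to 1$ whose increments shrink as fast as the propagation bound $R(t)$, forms an infinite direct sum of the corresponding conjugated operators on an amplified module $\bigoplus_{k\geq 0}\ell^2(Z_Y)\otimes H$, and shows by reindexing (the swindle) that $[\Phi_f(a)]$ and $[\Phi_g(a)]$ differ by a class that absorbs itself. The strong Lipschitz conditions (uniform Lipschitz constant in (1), equicontinuity in (2)) are used to make this infinite sum land in the localization algebra with uniform control. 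If you want to repair your write-up, replace the $C[0,1]$-valued homotopy by this swindle argument; the first half of your proposal can be kept unchanged.
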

\par
The following Mayer-Vietoris sequence is proved in \cite{Yu97}.
\begin{proposition}\cite{Yu97}\label{six term exact sequence }
Let $X$ be a simplicial complex endowed with the spherical metric, and let $X_1$ and $X_2$ be its two simplicial subcomplexes endowed with the subspace metric. Then we have the following six term exact sequence
$$\xymatrix{
			K_0(C_L^*(X_1\cap X_2)) \ar[r] & K_0(C_L^*(X_1))\oplus K_0(C_L^*(X_2)) \ar[r] & K_0(C_L^*(X_1\cup X_2)) \ar[d]\\
			K_1(C_L^*(X_1\cup X_2)) \ar[u] & K_1(C_L^*(X_1))\oplus K_1(C_L^*(X_2))
			\ar[l] & K_1(C_L^*(X_1\cap X_2)) \ar[l]
}.$$
\end{proposition}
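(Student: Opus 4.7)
The plan is to derive the six-term exact sequence from the K-theory six-term sequence associated to a suitable short exact sequence of localization algebras, namely
\begin{equation*}
0 \longrightarrow C_L^*(X_1\cap X_2) \xrightarrow{\ \alpha\ } C_L^*(X_1)\oplus C_L^*(X_2) \xrightarrow{\ \beta\ } C_L^*(X_1\cup X_2) \longrightarrow 0,
\end{equation*}
where $\alpha(T)=(T,T)$ (via the natural inclusions of modules) and $\beta(T_1,T_2)=T_1-T_2$. The first step is to set up compatible modules: fix countable dense subsets $Z_{12}\subset Z_1,Z_2\subset Z$ of $X_1\cap X_2\subset X_1, X_2\subset X_1\cup X_2$ and realize all four localization algebras on restrictions/extensions of a single standard module $\ell^2(Z)\otimes H$, so that the inclusions make sense on the nose. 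Given this, once the displayed sequence is shown to be exact, the Mayer--Vietoris diagram in the statement is the standard six-term sequence in K-theory applied to it.

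The injectivity of $\alpha$ is clear from the definition, and $\beta\circ\alpha=0$ holds tautologically. Exactness in the middle is the step where the localization hypothesis is essential: if $(T_1,T_2)\in\ker\beta$, then $T_1=T_2$ as operators in $C^*(X_1\cup X_2)$, and each $T_i(t)$ has propagation going to $0$ as $t\to\infty$ and support in $X_i$. Hence for large $t$, the common value $T_1(t)=T_2(t)$ is supported in an $R(t)$-neighborhood of $X_1\cap X_2$, and can be reinterpreted (up to an asymptotically vanishing perturbation) as an element of $\mathbb{C}_L[X_1\cap X_2]$. The small-$t$ discrepancy is absorbed by a standard homotopy argument, showing that the class in the intersection algebra exists at the level of $\mathbb{C}_L$ and then by density passes to the closure.

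The main obstacle will be surjectivity of $\beta$, which requires decomposing an arbitrary $T\in C_L^*(X_1\cup X_2)$ into pieces supported near $X_1$ and near $X_2$ respectively. The idea is: since for $T\in\mathbb{C}_L[X_1\cup X_2]$ the propagation $R(t)$ goes to $0$, for $t$ large every matrix coefficient $T(t)(x,y)$ is nonzero only when $x,y$ lie in a common $R(t)$-ball, and such a ball meets at most one of $X_1\setminus X_2$, $X_2\setminus X_1$ (once $R(t)$ is smaller than the spherical distance separating the complements in the simplicial complex). Using a cutoff function $\chi_t$ associated to a partition of unity adapted to $X_1,X_2$ at scale $R(t)$, one sets $T_i(t)=\chi_{i,t}T(t)\chi_{i,t}$ (with an appropriate symmetrization) and checks that $T_i$ belongs to $\mathbb{C}_L[X_i]$ after restriction to the relevant module, and that $T(t)-(T_1(t)+T_2(t))$ lies in $\mathbb{C}_L[X_1\cap X_2]$, so $\beta(T_1,-T_2')\in T+\alpha(\mathbb{C}_L[X_1\cap X_2])$ for a suitable $T_2'$. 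Passing to norm closures and noting that $C_L^*(X_1\cap X_2)\subset\beta(C_L^*(X_1)\oplus C_L^*(X_2))$ via $\alpha$, one obtains surjectivity.

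The delicate point in the surjectivity argument is uniformity: the partition-of-unity cutoff must be performed simultaneously for all $t$ while keeping the operators $T_i$ uniformly continuous in $t$ and preserving the shrinking-propagation condition. The spherical metric on the simplicial complex is used precisely here, since it guarantees that points of $X_1\setminus X_2$ are uniformly separated from $X_2\setminus X_1$ on each simplex, so the cutoff can be made Lipschitz with a scale compatible with $R(t)$. Once this uniform decomposition is in place, the six-term exact sequence follows immediately from the standard K-theory long exact sequence.
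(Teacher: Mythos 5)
There is a genuine gap, and it lies at the very start of the argument: the displayed sequence
\begin{equation*}
0 \longrightarrow C_L^*(X_1\cap X_2) \xrightarrow{\ \alpha\ } C_L^*(X_1)\oplus C_L^*(X_2) \xrightarrow{\ \beta\ } C_L^*(X_1\cup X_2) \longrightarrow 0
\end{equation*}
is \emph{not} a short exact sequence of $C^*$-algebras, so there is no ``standard six-term sequence in $K$-theory'' to apply to it. The map $\beta(T_1,T_2)=T_1-T_2$ is linear and $*$-preserving but not multiplicative: $\beta\big((T_1,T_2)(S_1,S_2)\big)=T_1S_1-T_2S_2$, whereas $\beta(T_1,T_2)\,\beta(S_1,S_2)=T_1S_1-T_1S_2-T_2S_1+T_2S_2$. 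Consequently $\ker\beta$ is not a two-sided ideal of $C_L^*(X_1)\oplus C_L^*(X_2)$ either (multiply $(S,S)$ on one side by $(T_1,T_2)$ with $T_1\neq T_2$). The $K$-theoretic Mayer--Vietoris sequence for a $C^*$-algebra $A$ containing ideals $I_1,I_2$ with $A=I_1+I_2$ is indeed of the asserted shape, but it is derived from the two genuine short exact sequences $0\to I_1\cap I_2\to I_1\to A/I_2\to 0$ and $0\to I_2\to A\to A/I_2\to 0$ together with a diagram chase, not by feeding the ``difference'' sequence into the long exact sequence of a pair. Moreover, in your set-up even that route is blocked: $C_L^*(X_1)$ and $C_L^*(X_2)$ (with the support-in-$X_i$ normalization you chose) are not two-sided ideals of $C_L^*(X_1\cup X_2)$, since multiplying an operator supported on $X_1\times X_1$ by an operator of small but positive propagation spreads the support off of $X_1$. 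Finally, the claim invoked for surjectivity of $\beta$ --- that the spherical metric forces $X_1\setminus X_2$ and $X_2\setminus X_1$ to be uniformly separated --- is false: in a single $2$-simplex with vertices $a,b,c$, take $X_1$ the edge $ab$ and $X_2$ the edge $bc$; the two set differences approach each other near the shared vertex $b$, so no uniform cutoff scale exists.

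The proof in \cite{Yu97} circumvents all of this by working with \emph{relative} localization algebras $C_L^*(Y;X)$, defined using a condition of asymptotic support in shrinking neighbourhoods of $Y$ inside $X$. These \emph{are} closed two-sided ideals of $C_L^*(X)$ (precisely because propagations tend to $0$), one proves $C_L^*(X_1;X)+C_L^*(X_2;X)=C_L^*(X_1\cup X_2;X)$ and compares $C_L^*(X_1;X)\cap C_L^*(X_2;X)$ with $C_L^*(X_1\cap X_2;X)$ at the level of $K$-theory, and one identifies $K_*(C_L^*(Y;X))$ with $K_*(C_L^*(Y))$ via strong Lipschitz homotopy invariance (the shrinking neighbourhoods of a subcomplex deformation retract onto it with uniformly Lipschitz homotopies, which is where the spherical metric is actually used). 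That is the role of Proposition 2.7 (strong Lipschitz homotopy invariance) in the paper. If you recast your argument in terms of these ideal-like relative algebras rather than the intrinsic $C_L^*(X_i)$, the Mayer--Vietoris machinery becomes applicable and the decomposition you sketch for surjectivity can be carried out asymptotically without needing a uniform separation of $X_1\setminus X_2$ from $X_2\setminus X_1$.
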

\par
One can define a local assembly map $\mu_L: K_*(X)\to K_*(C^*_L(X))$ \cite{Yu97}.
\par
\begin{proposition}\cite{Yu97}
For a finite dimensional simplicial polyhedron endowed with the spherical metric, the local assembly map
$\mu_L$ is an isomorphism.
\end{proposition}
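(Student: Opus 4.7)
I would prove this by induction on the dimension $n$ of the finite simplicial polyhedron $X$, using the Mayer--Vietoris sequence for $C_L^*$ (Proposition \ref{six term exact sequence }) together with strong Lipschitz homotopy invariance, naturality of the local assembly map $\mu_L$, and the five lemma. The input from the $K$-homology side is its own Mayer--Vietoris sequence and homotopy invariance, both of which are classical for proper metric spaces.

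\textbf{Base case.} For $n=0$, $X$ is a finite discrete set. For a single point $\{x\}$, one checks directly that $C_L^*(\{x\})$ is homotopy equivalent as a $C^*$-algebra to $K(H)$ (the subalgebra of constant functions is a deformation retract up to the standard parametrized contraction in $t$), so that $K_*(C_L^*(\{x\})) \cong K_*(\mathbb{C}) \cong K_*(\{x\})$ and $\mu_L$ realises this identification. Additivity over finite disjoint unions then settles $n=0$.

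\textbf{Inductive step.} Suppose the statement holds for all finite polyhedra of dimension $<n$, and let $X$ be $n$-dimensional with the spherical metric. Decompose $X = X_1 \cup X_2$, where $X_1$ is a small open Lipschitz neighborhood of the $(n-1)$-skeleton $X^{(n-1)}$ that strongly Lipschitz deformation retracts onto $X^{(n-1)}$, and $X_2$ is a disjoint union of slightly shrunken closed $n$-simplices, one inside each top-dimensional simplex of $X$. Each component of $X_2$ is strongly Lipschitz homotopy equivalent to a point via the straight-line contraction to its barycenter (in the spherical metric this is Lipschitz with uniform constant), and $X_1 \cap X_2$ is a disjoint union of annular collars, strongly Lipschitz homotopy equivalent to the disjoint union of the boundary spheres $\partial \Delta^n$, which form an $(n-1)$-dimensional polyhedron. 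By strong Lipschitz homotopy invariance and the inductive hypothesis, $\mu_L$ is an isomorphism on $X_1$, on $X_2$, and on $X_1 \cap X_2$. The two Mayer--Vietoris six-term exact sequences are intertwined by the naturality of $\mu_L$ with respect to the inclusions, and the five lemma then gives the desired isomorphism for $X$.

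\textbf{Main obstacle.} The delicate point is compatibility of $\mu_L$ with the Mayer--Vietoris boundary maps on both sides; this requires representing $K$-homology classes by suitable cycles (e.g.\ via asymptotic morphisms or Paschke duality) and matching the resulting boundary construction with the boundary in $K_*(C_L^*(\cdot))$ obtained from a partition of unity subordinate to $\{X_1, X_2\}$. A secondary technical point is writing down the strong Lipschitz deformation retractions so that all three clauses of the definition hold with Lipschitz constants independent of simplex: linear interpolation toward the barycenters of the top simplices, adapted to the spherical metric, does the job, but one must verify the uniform bound and equicontinuity in the homotopy parameter. These verifications are standard and are the content of \cite{Yu97}.
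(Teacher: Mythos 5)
Your overall strategy --- induction on dimension using the Mayer--Vietoris sequence for $C_L^*$, strong Lipschitz homotopy invariance, and the five lemma --- is indeed the strategy of Yu's proof in \cite{Yu97}, which is what the paper cites here (the paper does not reprove this). However, there is a genuine gap in your base case.

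The proposition says \emph{finite-dimensional}, not \emph{finite}. In the application at hand, the polyhedra are the Rips complexes $P_d(G_m)$ of countable (generally infinite) groups, so the $0$-skeleton is an infinite uniformly discrete metric space and, likewise, the pieces $X_2$ and $X_1\cap X_2$ in your inductive step are infinite disjoint unions. Your treatment of $n=0$ --- ``$X$ is a finite discrete set'' and ``additivity over finite disjoint unions then settles $n=0$'' --- therefore does not cover the case that actually matters. For an infinite $c$-separated set $\Gamma$, the identification $K_*(C_L^*(\Gamma))\cong K_*^{lf}(\Gamma)$ is not a formal additivity statement: one has to argue that an element $g(t)$ of the localization algebra, whose propagation $R(t)\to 0$, eventually becomes block-diagonal over the points of $\Gamma$, and then match the resulting restricted-product structure of $K_*(C_L^*(\Gamma))$ against locally finite $K$-homology. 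This is precisely where the substance of the base case in \cite{Yu97} lies, and it is missing from your write-up.

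A secondary point: the Mayer--Vietoris sequence quoted in the paper (Proposition~\ref{six term exact sequence }) is stated for a pair of simplicial \emph{subcomplexes}, whereas your $X_1$ (a small open Lipschitz neighborhood of the skeleton) and $X_2$ (shrunken simplices) are not subcomplexes. Yu works instead with a barycentric subdivision so that the decomposition stays inside the subcomplex category; your version can presumably be repaired by replacing $X_1$, $X_2$ by suitable subcomplexes of a subdivision that carry the same strong Lipschitz homotopy types, but as written it does not match the hypothesis of the Mayer--Vietoris result being invoked.
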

\par
The {\em evaluation-at-zero map} $e: C^*_L(X) \to C^*(X)$ is defined by $e(g)=g(0)$. It induces a homomorphism at the $K$-theory level $e_*: K_*(C^*_L(X)) \to K_*(C^*(X))$. For a discrete metric space $X$ with bounded geometry, we have the following commutative diagram:
\[
\xymatrix{
& \displaystyle\lim_{d\to \infty}K_*(P_d(X))\ar[dl]_{\mu_L}^{\cong} \ar[dr]^{\mu} & \\
\displaystyle\lim_{d\to \infty} K_*(C^*_L(P_d(X)))\ar[rr]^{e_*} &  &  \displaystyle\lim_{d\to \infty} K_*(C^*(P_d(X)))
}
\]
\par
\begin{proposition}\label{evaluation}
Let $X$ be a discrete metric space with bounded geometry. To prove the coarse Novikov conjecture for $X$, it suffices to show that
$e_*$ is injective.
\end{proposition}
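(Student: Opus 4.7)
The plan is essentially to read off the claim from the commutative diagram displayed immediately before the proposition, once one verifies that the left-hand leg $\mu_L$ is an isomorphism after passing to the direct limit. Concretely, I would argue as follows.

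First I would observe that for each fixed $d\geq 0$ the Rips complex $P_d(X)$ is a finite-dimensional simplicial polyhedron. Finite-dimensionality uses the bounded geometry hypothesis: since balls of radius $d$ have at most $N_d$ elements, no simplex in $P_d(X)$ has more than $N_d$ vertices, so $\dim P_d(X)\leq N_d-1<\infty$. Endowing $P_d(X)$ with the spherical metric, the earlier proposition attributed to Yu then gives that the local assembly map
\[
\mu_L : K_*(P_d(X))\longrightarrow K_*(C^*_L(P_d(X)))
\]
is an isomorphism. The constructions of $\mu_L$ and $\mu$ are functorial with respect to the coarse equivalences $i_{d'd}:P_d(X)\to P_{d'}(X)$, so these isomorphisms assemble into a natural transformation of directed systems. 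Passing to the direct limit, and using that $\varinjlim$ preserves isomorphisms in the category of abelian groups, one obtains that
\[
\mu_L : \varinjlim_{d\to\infty} K_*(P_d(X))\longrightarrow \varinjlim_{d\to\infty} K_*(C^*_L(P_d(X)))
\]
is itself an isomorphism.

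Next I would invoke the commutativity of the triangle displayed just above the proposition, which expresses the identity $\mu=e_*\circ \mu_L$ on $\varinjlim_d K_*(P_d(X))$. Assuming that $e_*$ is injective, the composition $e_*\circ \mu_L$ of an isomorphism followed by an injection is injective, hence $\mu$ is injective. This is exactly the statement of the coarse Novikov conjecture for $X$, so the reduction is complete.

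There is really no obstacle here beyond bookkeeping: the content is entirely contained in the two input facts, namely that $\mu_L$ is an isomorphism for each finite-dimensional $P_d(X)$ and that the triangle commutes. The only point that deserves a line of care is the passage from the isomorphism at each finite $d$ to an isomorphism of direct limits, which relies on the naturality of $\mu_L$ with respect to the inclusions $i_{d'd}$; this naturality is built into the definitions of the $K$-homology group and of $C^*_L$, so no further work is needed.
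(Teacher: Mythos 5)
Your argument is correct and is exactly the one the paper relies on: the proposition follows immediately from the displayed commutative triangle, where $\mu_L$ is an isomorphism (by bounded geometry giving finite-dimensionality of each $P_d(X)$, Yu's isomorphism result, and naturality under the inclusions $i_{d'd}$), so $\mu = e_* \circ \mu_L$ is injective whenever $e_*$ is. The paper leaves the proof implicit in the diagram; you have simply spelled out the same reasoning.
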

	
		In the rest of this section, we introduce the coarse Baum-Connes conjecture for a sequence of metric spaces.
	
	Let $(X_n)_{n\in \mathbb{N}}$ be a sequence of metric spaces with uniform bounded geometry. For each $d>0, n\in \mathbb{N}$, we choose a countable dense subset $X_d^n\subset P_d(X_n)$ such that $X_d^n\subset X_{d'}^n$ if $d<d'$.
	\begin{definition}\label{algebraic Roe algera for sequence}
		For each $d>0$, the algebraic uniform Roe algebra $\mathbb{C}_u[(P_d(X_n))_{n\in \mathbb{N}}]$ is the collection of tuples $T=\left(T^{(n)}\right)_{n\in \mathbb{N}}$ where each
		$T^{(n)}: X^n_d\times X^n_d \to K$ is a bounded founction satisfying the following conditions:
		\begin{itemize}
\item [(1)] there exists $M>0$ such that $\|T^{(n)}(x,y)\|\leq M$ for each $n\in \mathbb{N}$ and $x,y\in Z^n_d$;
\item [(2)] there exists $r>0$ such that  for each $n\in \mathbb{N}$, $T^{(n)}(x,y)=0$ for all $x,y\in X^n_d$ satisfying $d(x,y)\geq r$;	
\item [(3)] there exists $L>0$ such that for each $n\in \mathbb{N}$ and $x \in X^n_{d}$,
			$$\sharp\{y\in X_d^n:T^{(m)}(x,y)\neq 0\}\leq L, \text{ and } \sharp\{y\in X_{d}^n:T^{(m)}(y,x)\neq 0\}\leq L;$$
\item [(4)] for any sequence of uniformly bounded subsets $(B_n)_{n\in \mathbb{N}}$ where $B_n\subset P_d(X_n)$, the set $$\left\{(x,y)\in (B_n\times B_n)\cap (X^n_d\times X^n_d): T^{(n)}(x,y)\neq 0\right\}$$
			is uniformly finite.
		\end{itemize}
	\end{definition}
	
	The algebraic uniform Roe algebra $\mathbb{C}_u[(P_d(X_n))_{n\in \mathbb{N}}]$ is a $*$-algebra and admits a faithful representation on $\bigoplus\ell^2(X_d^n)\otimes H$ via multiplication. Denote $E=\bigoplus\ell^2(X_d^n)\otimes H$. The uniform Roe algebra for the sequence $(P_d(X_n))_n$, denoted by $C_u^*((P_d(X_n))_{n\in \mathbb{N}})$, the completion of $\mathbb{C}_u[(P_d(X_n))_{n\in \mathbb{N}}]$ under the operator norm on $\mathcal{B}(E)$.
	
	\begin{definition}
		The algebraic uniform localization algebra, $\mathbb{C}_{u,L}[(P_d(X_n))_{n\in \mathbb{N}}]$ is the $*$-algebra of all bounded and uniformly continuous functions $$g:[0,\infty)\rightarrow \mathbb{C}_u[(P_d(X_n))_{n\in \mathbb{N}}]$$
		such that $g(t)$ is of the form $g(t)=(g^{(n)}(t))_{n\in \mathbb{N}}$, where  the tuples $(g^{(n)}(t))_{n\in \mathbb{N}}$ satisfies the condition of Definition \ref{algebraic Roe algera for sequence} with uniform constants for all $t\in [0,\infty)$ and there is a bounded function $R: [0,\infty) \rightarrow [0,\infty)$ with $\lim\limits_{t\to \infty}R(t)=0$, such that for all  $n\in \mathbb{N}, x,y \in X_d^n$ and $t\in [1,\infty)$, $$(g^{(n)}(t))_{n\in \mathbb{N}}(x,y)=0 \text{ if } d(x,y)>R(t).$$
		
	\end{definition}
	
	The uniform localization algebra $C_{u,L}^*((P_d(X_n))_{n\in \mathbb{N}})$ is defined to be the completion of $\mathbb{C}_{u,L}[(P_d(X_n))_{n\in \mathbb{N}}]$ with repect to the norm 
	$\|g\|=\sup_{t\in [0, \infty)}\|g(t)\|.$
 
 Naturally, we have the evaluation-at-zero map $e:C_{u,L}^*((P_d(X_n))_{n\in \mathbb{N}}) \rightarrow C_{u}^*((P_d(X_n))_{n\in \mathbb{N}})$ defined by $e(g)=g(0)$.
	It induces a homomorphism
	at the K-theory level $e_*:K_*(C_{u,L}^*((P_d(X_n))_{n\in \mathbb{N}})) \rightarrow K_*(C_{u}^*((P_d(X_n))_{n\in \mathbb{N}}).$
	
	{\bf The coarse Novikov conjecture for a sequence of metric spaces.} For any sequence of discrete metric spaces $(X_n,d_n)_{n\in \mathbb{N}}$ with uniform bounded geometry, the map $$e_*:K_*(C_{u,L}^*((P_d(X_n))_{n\in \mathbb{N}})) \rightarrow K_*(C_{u}^*((P_d(X_n))_{n\in \mathbb{N}}))$$ is injective.
	
\section{Twisted algebras with coefficients from the sequence of quotient groups $(G_m/N_m)_{m\in \mathbb{N}}$  }

Let $(1\to N_m\to G_m\to G_m/N_m \to 1)_{m\in \mathbb{N}}$ be a sequence of extensions of countable discrete groups such that $(N_m)_{m\in \mathbb{N}}$ and $(G_m/N_m)_{m\in \mathbb{N}}$ are coarsely embeddable into Hilbert spaces. In this section, we will define the twisted Roe algebras and the twisted localization algebras for the sequence of Rips complexes $(P_d(G_m))_{m\in \mathbb{N}}$, in which the coefficients come from the coarse embedding of the sequence of quotient groups $(G_m/N_m)_{m\in \mathbb{N}}$ into Hilbert spaces. The construction of these twisted algebras has its
origin in \cite{Yu00}.
\par
To get started, we first recall the $C^*$-algebra associated with an infinite-dimensional Euclidean space introduced by Higson, Kasparov and Trout \cite{HKT98}. Let $V$ be a countably infinite dimensional Euclidean space. Denote by $V_a$, $V_b$, and so on, the finite dimensional affine subspaces of $V$. Denote by $V_a^0$ the finite dimensional linear subspace of $V$ consisting of differences of elements of $V_a$.
Let $\text{Cliff}(V_a^0)$ be the complexified Clifford algebra on $V_a^0$, and let $\mathcal{C}(V_a)$ be the graded $C^*$-algebra of continuous functions from $V_a$ to $\text{Cliff}(V_a^0)$ vanishing at infinity. Let $\mathcal{S}:=C_0(\mathbb{R})$ be the $C^*$-algebra of all continuous functions on $\mathbb{R}$ vanishing at infinity. Then $\mathcal{S}$ is graded according to odd and even functions. Define the graded tensor product $$\mathcal{A}(V_a)=\mathcal{S}\hat{\otimes}\mathcal{C}(V_a).$$ If $V_a \subseteq V_b$, we have a decomposition $V_b=V_{ba}^0+ V_a$, where $V_{ba}^0$ is the orthogonal complement of $V_a^0$ in $V_b^0$. For each $v_b \in V_b$, we have a unique decomposition $v_b=v_{ba}+v_a$, where $v_{ba} \in V_{ba}^0$ and $v_a\in V_a$. Every function $h$ on $V_a$ can be extended to a function $\tilde{h}$ on $V_b$ by the formula: $\tilde{h}(v_{ba}+v_a)=h(v_a)$.
\par	
\begin{definition}[\cite{HKT98}]\label{HKT Bott map} \quad (1) If $V_a \subseteq V_b$, we define $C_{ba}$ to be the Clifford algebra-valued function $V_b \rightarrow \text{Cliff}(V_b^0)$, $v_b \mapsto v_{ba}\in  V_{ba}^0 \subset \text{Cliff}(V_b^0)$. Let $X$ be the function of multiplication by $x$ on $\mathbb{R}$, considered as a degree one and unbounded multiplier of $\mathcal{S}$.  Define a homomorphism $\beta_{ba}: \mathcal{A}(V_a) \rightarrow \mathcal{A}(V_b)$ by
$$\beta_{ba}(g\hat{\otimes}h)=g(X\hat{\otimes}1+1\hat{\otimes}C_{ba})(1\hat{\otimes}\tilde{h})$$
for all $g\in \mathcal{S}$ and $h\in\mathcal{C}(V_a)$, where $g(X\hat{\otimes}1+1\hat{\otimes}C_{ba})$ is defined by the functional caculus of $g$ on the unbounded, essentially self-adjoint multipier $X\hat{\otimes}1+1\hat{\otimes}C_{ba}$. \\
\indent (2) If $V_a \subset V_b \subset V_c$, then we have $\beta_{cb} \circ \beta_{ba}=\beta_{ca}$. Hence, the above homomorphisms give rise to a directed system $(\mathcal{A}(V_a))$ as $V_a$ ranges over finite dimensional affine subspaces of $H$. We define a $C^*$-algebra $\mathcal{A}(V)$ by :
	$$\mathcal{A}(V)=\lim\limits_{\longrightarrow}\mathcal{A}(V_a),$$
where the direct limit is taken over the directed set of all finite-dimensional affine subspaces $V_a\subset V$, using the homomorphisms
$\beta_{ba}$ in (1).
\end{definition}
\par	
Now let's return back to the case of interest. Let $(1\to N_m\to G_m\to G_m/N_m \to 1)_{m\in \mathbb{N}}$ be a sequence of extensions of countable discrete groups such that $(N_m)_{m\in \mathbb{N}}$ and $(G_m/N_m)_{m\in \mathbb{N}}$ are coarsely embeddable into Hilbert spaces.  For each $m\in \mathbb{N}$, let $l_{G_m}$ be a proper length function on $G_m$, so that a left invariant metric $d_{G_m}$ on $G_m$ is defined
by $d_{G_m}(g, h)=l_{G_m}(g^{-1}h)$.  Define proper length functions on $N_m$ and $G_m/N_m$ according to $l_{N_m}(h)=l_{G_m}(h)$ for all $h\in N_m$, and
$l_{G_m/N_m}(gN_m)=\min\{l_{G_m}(gh): h\in N_m \}$ for all $gN_m\in G_m/N_m$. Denote the associated left-invariant metrics by $d_{N_m}$ and $d_{G_m/N_m}$, respectively. 
Observe that the inclusion $N_m\hookrightarrow G_m$ is an isometry, and the quotient map $G_m\to G_m/N_m$ is contractive for each $m\in \mathbb{N}$. Assume that the sequence of metric spaces  $(G_m)_{m\in \mathbb{N}}$ and $(G_m/N_m)_{m\in \mathbb{N}}$ have uniform bounded geometry.
\par
Let $f_m: G_m/N_m \rightarrow H_m$ be a family of coarse embeddings of $(G_m/N_m)_{m\in \mathbb{N}}$ into a Hilbert spaces $\{H_m\}_{m\in \mathbb{N}}$, respectively.
For each $m\in \mathbb{N}$, $n\in \mathbb{N}$, $x\in G_m/N_m$, we define $W^m_n(x)$ to be the finite dimensional Euclidean subspace of $H_m$ spanned by
$\{f_m(y): y\in G_m/N_m, d_{G_m/N_m}(x,y)\leq n^2\}$. Let $W^m(x)=\cup_{n\in \mathbb{N}} W^m_n(x)$. Then we have the following:
(1) $W^m_n(x)\subseteq W^m_{n+1}(x)$ for all $n\in \mathbb{N}$ and $x\in G_m/N_m$;  (2)  for each $r>0$, there exists $N>0$ independent of $m$ such that $W^m_n(x)\subset W^m_{n+1}(y)$ for all $n\geq N$, $x,y\in G_m/N_m$ with $d_{G_m/N_m}(x,y)\leq r$;  (3) by the uniform bounded geometry property of the sequence of metric spaces $(G_n/N_n)_{n\in \mathbb{N}}$, for each $n\in \mathbb{N}$, there exists $d_n>0$ independent of $m$ such that the dimension of $W^m_n(x)$ is less than $d_n$ for all $x\in G_m/N_m$. It follows that $W^m(x)$ is independent of the choice of $x\in G_m/N_m$. We denote
$$V_m:=W^m(x)$$
and, without loss of generality, assume that $V_m$ is dense in $H_m$.
For each $m\in \mathbb{N}$, the set $\mathbb{R}_+ \times H_m$ is equipped with the weakest topology for which the projection to $H_m$ is continuous with respect to the weak topology on $H_m$, and the function $t^2+\|h\|^2$ is continuous in $(t, h)\in \mathbb{R}_+ \times H_m$. In this way, the space $\mathbb{R}_+ \times H_m$ is a locally compact Hausdorff space. Note that, for all $v\in H_m$ and $r>0$, the set
$\mathrm{Ball}(v, r):=\{(t, h)\in \mathbb{R}_+\times H_m | t^2+\|h-v\|^2<r^2\}$ is an open subset of $\mathbb{R}_+\times H_m$. For finite dimensional affine subspaces $V_a\subset V_b$ of $V_m$, the map $\beta_{ba}$ takes $C_0(\mathbb{R}_+\times V_a)$ into $C_0(\mathbb{R}_+\times V_b)$. It follows that the $C^*$-subalgebra $\displaystyle\lim_{\to } C_0(\mathbb{R}_+\times V_a)$ of $C_0(\mathbb{R}_+\times H_m)$ is in fact $*$-isomorphic to $C_0(\mathbb{R}_+\times H_m)$. The support of an element $a\in \mathcal{A}(V_m)$ is defined to be the complement in $\mathbb{R}_+\times H_m$
of the subset of all
$(t, h)$ for which there exists $g\in C_0(\mathbb{R}_+\times H_m)$ such that $ag=0$ but $g(t, h)\neq 0$. The algebra $C_0(\mathbb{R}_+\times H_m)$
acts on $\mathcal{A}(V_m)\hat\otimes K$ by the formula: $g(a\hat\otimes k)=ga \hat\otimes k$. Hence we can define the support of an element in
$\mathcal{A}(V_m)\hat\otimes K$ in a similar way.
\par
For each $d >0$, $m \in \mathbb{N}$, the map $f_m: G_m/N_m\to H_m$ can be extended to the Rips complexes $P_d({G_m/N_m})$ as follows.
Let $G_m=\sqcup_{g\in \Lambda} \, gN_m$ be a coset decomposition of $G_m$, where $\Lambda\subset G_m$ is a set of representatives of the cosets in $G_m/N_m$.
For any point $x=\sum_{g\in \Lambda}c_g \; gN_m \in P_d(G_m/N_m)$, where all but finitely many coefficients $c_g \geq 0$ are zero and $\sum_{g\in \Lambda}c_g=1$, we define
$$f_m(x)=\sum_{g\in \Lambda}c_g f(gN_m).$$
For any point $x=\sum_{g\in \Lambda}c_g \; gN_m \in P_d(G_m/N_m)$ and $n\in \mathbb{N}$,  define $W^m_n(x)$ to be the Euclidean subspace of $V_m$ spanned by
$W^m_n(gN_m)$ for all $gN_m$ such that $c_{g}\neq 0$.
\par
Let $\pi :G_m\rightarrow G_m/N_m$ be the quotient map. It induces a map $\pi:P_d(G_m)\rightarrow P_d(G_m/N_m)$ by
$$\pi\left( \sum_{i=0}^k{c_i} g_i \right)=\sum_{i=0}^k {c_i} \pi(g_i),$$
where $c_i \geq 0$ and $\sum_{i=0}^k c_i=1$.
For each $m \in \mathbb{N}$, choose a countable dense subset $Z^m_d$ of $P_d(G_m)$ for each $d\geq 0$ such that $\pi (Z^m_d)$ is dense in $P_d(G_m/N_m)$, and $Z^m_{d} \subseteq Z^m_{d'}$ if
$d \leq d'$.
\par
\begin{definition}\label{algebraic twisted Roe algebra}
For each $d\geq 0$, the {\em algebraic uniform twisted Roe algebra $\mathbb{C}_u[(P_d(G_m),\mathcal{A}(V_m))_{m\in \mathbb{N}}]$} is defined to be the set of all tuples $T=(T^{(m)})_{m\in \mathbb{N}}$ where each
$T^{(m)}: Z^m_d\times Z^m_d \to \mathcal{A}(V_m)\hat\otimes K$ is a bounded function satisfying the following conditions:
\begin{itemize}
    \item [(1)] there exists an integer $N$ such that
                $$T^{(m)}(x,y)\in (\beta_N(\pi(x)))(\mathcal{A}(W^m_N(\pi(x))))\hat{\otimes}K\subseteq \mathcal{A}(V_m)\hat{\otimes}K$$
		          for each $m\in \mathbb{N}$ and $x,y\in Z^m_d$, where $\beta_N(\pi(x)): \mathcal{A}(W^m_N(\pi(x)))\rightarrow \mathcal{A}(V_m)$ is the $*$-homomorphism
                    associated to the inclusion of $W^m_N(\pi(x))$ into $V_m$, and $K$ is the algebra of compact operators;
	\item [(2)] there exists $M>0$ such that $\|T^{(m)}(x,y)\|\leq M$ for each $m\in \mathbb{N}$ and $x,y\in Z^m_d$;
	\item [(3)] there exists $L>0$ such that, for each $m\in \mathbb{N}$ and $y\in Z^m_d$,
		          $$\sharp\{x:T^{(m)}(x,y)\neq 0\}\leq L, \   \ \sharp\{x: T(y,x)\neq 0\}\leq L;$$
	\item [(4)] for any sequence of uniformly bounded subsets $(B_m)_{m\in \mathbb{N}}$ where $B_m\subset P_d(G_m)$, the set $$\left\{(x,y)\in (B_m\times B_m)\cap (Z^m_d\times Z^m_d): T^{(m)}(x,y)\neq 0\right\}$$
	is uniformly finite.
	\item [(5)]there exists $r_1>0$ such that for each $m\in \mathbb{N}$, if $d_{P_d(G_m)}(x,y)>r_1$, then $T^{(m)}(x,y)=0;$
	\item [(6)]there exists $r_2>0$ such that $\mathrm{Supp}(T^{(m)}(x,y))\subseteq \mathrm{Ball}(f(\pi (x)),r_2)$ for each $m\in \mathbb{N}$ and $x,y\in Z^m_d$ where
                $$\mathrm{Ball}(f(\pi(x)),r_2):=\left\{(s,h)\in \mathbb{R}_+\times H: s^2+\|h-f(\pi(x))\|^2 < r_2^2\right\}.$$
\end{itemize}
\end{definition}

	We define a product structure on $\mathbb{C}_u[(P_d(G_m),\mathcal{A}(V_m))_{m\in \mathbb{N}}]$ by:
	$$(T_1T_2)^{(m)}(x,y)=\sum_{z\in Z^m_d}T^{(m)}_1(x,z)T^{(m)}_2(z,y).$$
	Then $\mathbb{C}_u[(P_d(G_m),\mathcal{A}(V_m))_{m\in \mathbb{N}}]$ is made into a $*$-algebra via matrix multiplications and the $*$-operation on $\mathcal{A}(V_m)\hat{\otimes}K$.
Let $$E^m=\left\{\sum_{x\in Z^m_d} a_x[x]: a_x\in \mathcal{A}(V_m)\hat{\otimes}K, \sum_{x\in Z^m_d}a_x^*a_x \text{ converges in norm } \right\}.$$
Note that $E^m$ is a Hilbert $C^*$-module over $\mathcal{A}(V_m)\hat{\otimes}K$ with
	$$\left<\sum_{x\in Z^m_d} a_x[x],\sum_{x\in Z^m_d} b_x[x]\right>=\sum_{x\in Z^m_d} a_x^*b_x,$$
	$$\left(\sum_{x\in Z^m_d} a_x[x]\right)a=\sum_{x\in Z^m_d} a_xa[x]$$
for all $a\in \mathcal{A}(V_m)\hat{\otimes}K$.  We can define a $*$-representation of the $*$-algebra  $\mathbb{C}_u[(P_d(G_m),\mathcal{A}(V_m))_{m\in \mathbb{N}}]$ on $E^m$ as follows:
 $$T\left(\sum\limits_{x\in Z^m_d} a_x[x]\right)=\sum\limits_{y\in Z^m_d}\left(\sum_{x\in Z^m_d} T^{(m)}(y,x)a_x[x]\right)[y],$$
where $T=(T^{(m)})_{m\in \mathbb{N}}\in \mathbb{C}_u[(P_d(G_m),\mathcal{A}(V_m))_{m\in \mathbb{N}}] $ and $\sum\limits_{x\in Z^m_d} a_x[x] \in E^m.$ Then by taking sum of these $*$-representations, we obtain a faithful $*$-representation for the $*$-algebra $\mathbb{C}_u[(P_d(G_m),\mathcal{A}(V_m))_{m\in \mathbb{N}}]$ on $\bigoplus_{m} E^m$.
\par	
\begin{definition}\label{twisted Roe algebra}
The {\em uniform twisted Roe algebra $C_u^*((P_d(G_m),\mathcal{A}(V_m))_{m\in \mathbb{N}})$} is defined to be the operator norm closure of $\mathbb{C}_u[(P_d(G_m),\mathcal{A}(V_m))_{m\in \mathbb{N}}]$ in $\mathcal{B}(\bigoplus_m E^m)$, the $C^*$-algebra of all module homomorphisms from $\bigoplus_m E^m$ to $\bigoplus_m E^m$ for which there is an adjoint module homomorphism.
\end{definition}
\par
\begin{definition}\label{twisted localization algebra}	
Let $\mathbb{C}_{u,L}[(P_d(G_m),\mathcal{A}(V_m))_{m\in \mathbb{N}}]$ be the set of all bounded and uniformly norm-continuous functions
$$g: \mathbb{R}_+ \rightarrow \mathbb{C}_{u,L}[(P_d(G_m),\mathcal{A}(V_m))_{m\in \mathbb{N}}]$$
satisfying the following conditions:
	\begin{itemize}
		\item [(1)] there exists $N>0$ such that
                    $(g(t))(x,y)\in (\beta_N(\pi(x)))(\mathcal{A}(W^m_N(\pi(x))))\hat{\otimes}K\subseteq \mathcal{A}(V_m)\hat{\otimes}K$
                    for all $m\in \mathbb{N},t\in \mathbb{R}_+$, $x, y \in Z^m_d;$
		\item [(2)] there exists a bounded function $R:\mathbb{R}+ \rightarrow \mathbb{R}_+$ such that
                    $\lim\limits_{t\rightarrow \infty}R(t)=0$ and for all $m\in \mathbb{N}, x, y \in Z^m_d$, if $d_{P_d(G_m/N_m)}(\pi(x),\pi (y))> R(t)$, then $(g(t))(x,y)=0;$
		\item [(3)] there exists $R >0$ such that $\mathrm{Supp}(g(t)(x,y)) \subseteq \mathrm{Ball}(f(\pi(x)),R)$ for all
                    $m\in \mathbb{N}, t\in \mathbb{R}_+, x,y \in Z^m_d$.
	\end{itemize}
\par	
The {\em uniform twisted localization algebra $C_{u,L}^*((P_d(G_m),\mathcal{A}(V_m))_{m\in \mathbb{N}})$} is defined to be the norm completion of $\mathbb{C}_{u,L}[(P_d(G_m),\mathcal{A}(V_m))_{m\in \mathbb{N}}]$ with respect to the norm  $\|g\|=\sup\limits_{t\in \mathbb{R}_{+}}\|g(t)\|_{C^*((P_d(G_m),\mathcal{A}(V_m))_{m\in \mathbb{N}})}.$
\end{definition}
\begin{remark}
	For each $m\in \mathbb{N}$, we can define algebraic twisted Roe algebra $\mathbb{C}[(P_d(G_m),\mathcal{A}(V_m))]$ consisting of all bounded functions $T^{(m)}: Z^m_d\times Z^m_d \to \mathcal{A}(V_m)\hat\otimes K$ satisfying the conditions of Definition \ref{algebraic twisted Roe algebra}. Then the $*$-algebra $\mathbb{C}_u[(P_d(G_m),\mathcal{A}(V_m))_{m\in \mathbb{N}}]$ can be regarded as $\prod^u\limits_{m\in \mathbb{N}}\mathbb{C}[P_d(G_m),\mathcal{A}(V_m)]$, where we add $u$ on the direct product symbol to emphasize that the latter $*$-algabra consists of  the family of operators $T^{(m)}$, $m\in \mathbb{N}$, satisfy the conditions of
	Definition \ref{algebraic twisted Roe algebra} with uniform constants. Denote by $\prod^u\limits_{m\in \mathbb{N}}C^*(P_d(G_m),\mathcal{A}(V_m))$, is the operator norm closure of $\prod^u\limits_{m\in \mathbb{N}}\mathbb{C}[P_d(G_m),\mathcal{A}(V_m)]$ in $\mathcal{B}(\oplus_m E^m)$. Naturally, the $C^*$-algebra $\prod^u\limits_{m\in \mathbb{N}}C^*(P_d(G_m),\mathcal{A}(V_m))$ is equal to $C_u^*((P_d(G_m),\mathcal{A}(V_m))_{m\in \mathbb{N}})$. We can similarly define the $C^*$-algebra $\prod^u\limits_{m\in \mathbb{N}}C^*_L(P_d(G_m),\mathcal{A}(V_m))$ as another form of $C_{u,L}^*((P_d(G_m),\mathcal{A}(V_m))_{m\in \mathbb{N}})$.
\end{remark}

\par
There is a natural {\em evaluation-at-zero} homomorphism
$$e: C_{u,L}^*((P_d(G_m),\mathcal{A}(V_m))_{m\in \mathbb{N}}) \rightarrow C_u^*((P_d(G_m),\mathcal{A}(V_m))_{m\in \mathbb{N}})$$
defined by $e(g)=g(0)$. It induces a homomorphism at the $K$-theory level:
$$e_*: \lim\limits_{d\rightarrow \infty}K_*(C_{u,L}^*((P_d(G_m),\mathcal{A}(V_m))_{m\in \mathbb{N}})) \rightarrow \lim\limits_{d\rightarrow \infty}K_*(C_u^*((P_d(G_m),\mathcal{A}(V_m))_{m\in \mathbb{N}})).$$
	
\section{Reduction to the sequence of normal subgroups $(N_m)_{m\in\mathbb{N}}$}

In this section, we shall prove that the {\em "twisted coarse Baum-Connes conjecture for the sequence $(G_m)_{m\in \mathbb{N}}$"} holds. Namely, we have the following result.
\par
\begin{theorem}\label{twisted iso}
Let $(1\to N_m\to G_m\to G_m/N_m \to 1)_{m\in \mathbb{N}}$ be an extension of countable discrete groups. Assume that the sequence of metric spaces $(G_m)_{m\in \mathbb{N}}$ have uniform bounded geometry.  If $(N_m)_{m\in \mathbb{N}}$ and $(G_m/N_m)_{m\in \mathbb{N}}$ are coarsely embeddable into Hilbert space, then the evaluation-at-zero homomorphism on $K$-theory  		
$$e_*:\lim\limits_{d\rightarrow \infty}K_*(C_{u,L}^*((P_d(G_m),\mathcal{A}(V_m))_{m\in \mathbb{N}})) \rightarrow \lim\limits_{d\rightarrow \infty}K_*(C_u^*((P_d(G_m),\mathcal{A}(V_m))_{m\in \mathbb{N}}))$$
is an isomorphism.
\end{theorem}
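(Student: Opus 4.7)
The plan is to carry out a Mayer--Vietoris cutting-and-pasting argument on the Hilbert-space side of the coefficient algebra $\mathcal{A}(V_m)$, localizing the twisted algebras to uniformly bounded open subsets of $\mathbb{R}_+\times H_m$ and reducing the computation to the sequence of kernels $(N_m)_{m\in\mathbb{N}}$. For each family of open subsets $\mathcal{O}=(\mathcal{O}_m)_{m\in\mathbb{N}}$ with $\mathcal{O}_m\subseteq \mathbb{R}_+\times H_m$, I would introduce the ideals $C^*_u((P_d(G_m),\mathcal{A}(V_m))_m)_{\mathcal{O}}$ and $C^*_{u,L}((P_d(G_m),\mathcal{A}(V_m))_m)_{\mathcal{O}}$ consisting of elements whose $\mathrm{Ball}(\cdot,r_2)$-supports lie inside $\mathcal{O}_m$ (at every $t\in\mathbb{R}_+$ in the localization case). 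The evaluation map $e$ preserves these ideals, and the goal is to show that $e_*$ is an isomorphism as $\mathcal{O}$ exhausts $\mathbb{R}_+\times H_m$.

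First, I would establish a Mayer--Vietoris six-term exact sequence in K-theory, compatible with $e_*$, for pairs $\mathcal{O}=\mathcal{O}^{(1)}\cup\mathcal{O}^{(2)}$ of such families. A five-lemma argument then reduces the theorem to the case of a uniformly bounded family of open balls $\mathcal{O}_m=\mathrm{Ball}(v_m,r)$ with fixed $r>0$; a further inductive-limit argument extends from such bounded-ball families to all of $\mathbb{R}_+\times H_m$, using that K-theory commutes with inductive limits of $C^*$-algebras. This parallels Yu's strategy in \cite{Yu00} adapted to the sequence setting.

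The heart of the argument is the base case $\mathcal{O}_m=\mathrm{Ball}(v_m,r)$. The support condition (6) in Definitions \ref{algebraic twisted Roe algebra} and \ref{twisted localization algebra}, together with the fact that $f_m\colon G_m/N_m\to H_m$ is a coarse embedding, forces every nonzero entry $T^{(m)}(x,y)$ to have $f_m(\pi(x))$ within a uniformly bounded distance of $v_m$. Hence the relevant cosets $\pi(x)\in G_m/N_m$ lie in a subset $F_m\subset G_m/N_m$ of uniformly bounded diameter, and therefore of uniformly bounded cardinality by the uniform bounded geometry of $(G_m/N_m)_m$. Each such coset $gN_m$ is isometric to $N_m$ via left translation, while condition (1) confines the coefficient to a finite-dimensional $\mathcal{A}(W^m_N(\cdot))$-factor of uniformly bounded dimension. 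Combining these observations, the ball-restricted twisted algebra is isomorphic to a matrix amplification of the uniform Roe algebra of the sequence $(N_m)_m$, tensored with $\mathcal{A}$ of a uniformly finite-dimensional Euclidean space; the analogous identification holds for the localization algebras, compatibly with evaluation at zero. Since the finite-dimensional $\mathcal{A}$-factor contributes only through Bott periodicity, which respects evaluation, the base case reduces to the evaluation-map isomorphism for the uniform localization and Roe algebras of $(N_m)_m$ --- and this holds because $(N_m)_m$ is coarsely embeddable into Hilbert space by hypothesis (via the sequence version of Yu's theorem, as in \cite{Yu00,DWY21}).

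The main obstacle will be executing the base-case identification with full uniformity in $m$: the choice of coset representatives for $F_m$, the Bott trivialization of the finite-dimensional $\mathcal{A}$-twist, and the matching of propagation and support conditions between the twisted $G_m$-algebra and the untwisted $N_m$-algebra must all be arranged with constants independent of $m$. In parallel, the Mayer--Vietoris step requires uniformly locally finite open covers of $\mathbb{R}_+\times H_m$ compatible with conditions (1) and (6); the infinite-dimensionality of $H_m$ makes this delicate, but condition (1) ensures that only finite-dimensional subspaces $W^m_N(\cdot)$ play an effective role, so such covers can be constructed inductively along the filtration $W^m_n(\cdot)\subset W^m_{n+1}(\cdot)$.
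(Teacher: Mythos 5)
The broad strategy --- localizing the twisted algebras to open subsets of $\mathbb{R}_+\times H_m$, using the support condition (6) together with the coarse embedding to confine the relevant cosets, then reducing to the sequence $(N_m)_{m\in\mathbb{N}}$ and invoking Yu's theorem --- is indeed the strategy the paper uses. However, there is a genuine gap in your cutting-and-pasting step, and it sits precisely where the paper's Proposition~\ref{local iso} is doing the real work.

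You propose a Mayer--Vietoris / five-lemma reduction down to a \emph{single} ball $\mathcal{O}_m=\mathrm{Ball}(v_m,r)$ per $m$, and then ``a further inductive-limit argument'' to pass from finite families of balls up to all of $\mathbb{R}_+\times H_m$. That last inductive limit does not converge to the full twisted algebra. By condition (6), an element $T=(T^{(m)})_m$ of the twisted Roe algebra has entries $T^{(m)}(x,y)$ supported in $\mathrm{Ball}(f_m(\pi(x)),r_2)$ for \emph{all} $x\in Z_d^m$ simultaneously, with $\pi(x)$ ranging over the whole of $G_m/N_m$; such an element is never contained in the ideal cut out by any finite family of balls. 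The union of the finite-family ideals is therefore a proper, non-dense subalgebra, and $K$-theory commuting with $C^*$-inductive limits does not bridge this gap. The paper instead takes the inductive limit only over the \emph{radius} $r\to\infty$ while keeping all balls around all of $G_m/N_m$ at once, and then applies a coloring argument: by uniform bounded geometry of $(G_m/N_m)_m$ and the lower control on $f_m$, one partitions $G_m/N_m=\sqcup_{k=1}^{k_0}X_{m,k}$ with $k_0$ independent of $m$ so that the balls around distinct points of a single color class $X_{m,k}$ are $3r_0$-separated. The Mayer--Vietoris is then a genuinely \emph{finite} one, over the $k_0$ colors, and the base case handles an \emph{infinite but separated} family of balls in one stroke (Proposition~\ref{local iso}), where the twisted algebra decomposes as a uniform product $\prod^u_{i\in J}$ over the separated components --- a uniform product, not an inductive limit. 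That distinction is the crux of the argument, and it is what your proposal is missing.

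A secondary consequence of this gap shows up in your base case. After the separated-family decomposition and the homotopy reduction to $(N_m)_m$, the paper arrives at Roe algebras of $P_d(N_m)$ with coefficient algebra $\prod_{i\in J}\mathcal{A}(V_m)_{\mathcal{O}_{m,r}(i)}$ --- a possibly uncountable uniform product of infinite-dimensional $C^*$-algebras. Your plan to peel off the twist via finite-dimensional Bott periodicity is plausible for a single ball, but it does not obviously carry through uniformly across such a product. The paper sidesteps this by invoking Yu's theorem for the coarsely embeddable sequence $(N_m)_m$ \emph{with arbitrary $C^*$-algebra coefficients}, so that no trivialization of the twist is needed. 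Your homotopy observation (cosets isometric to $N_m$ by left translation, finitely many nearby cosets giving a matrix amplification) captures the same geometric input as the paper's strong Lipschitz homotopy equivalence from $P_d(\mathcal{N}_{G_m}(x_i^m N_m,R))$ to $P_d(N_m)$, and your identification of the uniformity issues to watch for is accurate; the difficulty is concentrated in the separation step, not in the local identification.
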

The proof proceeds by decomposing the twisted algebras into various smaller ideals or subalgebras, and applying a Mayer-Vietoris sequence argument and the strong Lipschitz homotopy invariance for these subalgebras in the twisted localization algebras to reduce the twisted coarse Baum-Connes conjecture for the sequene $(G_n)_{n\in \mathbb{N}}$ to the coarse Baum-Connes conjecture for sequence of the normal subgroups $(N_m)_{m\in \mathbb{N}}$ with {\em non-twisted coefficients}, which holds by appealing to Yu's original work in \cite{Yu00} or the recent work \cite{DWY21} of J. Deng, Q. W and G. Yu for the sequence $(N_m)_{m\in \mathbb{N}}$. 	
\par
To do so, we first discuss ideals of the uniform twisted algebras associated with any family of open subsets of $\{\mathbb{R}_+ \times H_m\}_{m\in \mathbb{N}}$.
\par	
	\begin{definition}
		\begin{itemize}
			\item [(1)] The support of an element $T=(T^{(m)})_{m\in \mathbb{N}} \in \mathbb{C}_u[(P_d(G_m),\mathcal{A}(V_m))_{m\in \mathbb{N}}]$ is defined to be a sequence of subsets
			$$\mathrm{Supp}(T):=(\mathrm{Supp}(T^{(m)}))_{m\in \mathbb{N}}$$ where
                            $$\mathrm{Supp}(T^{(m)}):=\left\{(x,y,t,h)\in Z^m_d\times Z^m_d \times \mathbb{R}_+ \times H_m: (t,h)\in \mathrm{Supp}(T^{(m)}(x,y))\right\}.$$
			\item [(2)] The support of an element $g$ in $\mathbb{C}_{u,L}[(P_d(G_m),\mathcal{A}(V_m))_{m\in \mathbb{N}}]$ is defined to be
                            $\mathop{\cup}\limits_{t\in  \mathbb{R}_+} \mathrm{Supp}(g(t)).$				
		\end{itemize}
	\end{definition}
\par	
Let $\mathnormal{O}=(O_m)_{m\in \mathbb{N}}$ be a sequence of open subsets, where each $O_m$ is an open subset in $\mathbb{R}_+ \times H_m$. Define $\mathbb{C}_u[(P_d(G_m),\mathcal{A}(V_m))_{m\in \mathbb{N}}]_{\mathnormal{O}}$ to be the subalgebra of $\mathbb{C}_u[(P_d(G_m),\mathcal{A}(V_m))_{m\in \mathbb{N}}]$ consisting of all elements $T=(T^{(m)})_{m\in \mathbb{N}}$ satisfying $\mathrm{Supp}(T^{(m)})\subset Z^m_d\times Z^m_d \times \mathnormal{O_m}$ for each $m\in \mathbb{N}$, i.e.,
$$\mathbb{C}_u[(P_d(G_m),\mathcal{A}(V_m))_{m\in \mathbb{N}}]_{\mathnormal{O}}=\left\{T\in \mathbb{C}_u[(P_d(G_m),\mathcal{A}(V_m))_{m\in \mathbb{N}}]:
\mathrm{Supp}(T^{(m)}(x,y))\subseteq \mathnormal{O_m}, \forall m\in \mathbb{N}, x,y\in Z^m_d\right\}.$$
Define $C^*$-subalgebra $C_u^*((P_d(G_m),\mathcal{A}(V_m))_{m\in \mathbb{N}})_{\mathnormal{O}}$ to be the norm closure of $\mathbb{C}_u[(P_d(G_m),\mathcal{A}(V_m))_{m\in \mathbb{N}}]_{\mathnormal{O}}$
in $C_u^*((P_d(G_m),\mathcal{A}(V_m))_{m\in \mathbb{N}})$. similarly, We can also define $C^*$-subalgebra $C_{u,L}^*((P_d(G_m),\mathcal{A}(V_m))_{m\in \mathbb{N}})_{\mathnormal{O}}$. Note that $C_u^*((P_d(G_m),\mathcal{A}(V_m))_{m\in \mathbb{N}})_{\mathnormal{O}}$ and $C_{u,L}^*((P_d(G_m),\mathcal{A}(V_m)_{\mathnormal{O}})_{m\in \mathbb{N}})$ are closed two-sided ideals of $C_u^*((P_d(G_m),\mathcal{A}(V_m))_{m\in \mathbb{N}})$ and $C_{u,L}^*((P_d(G_m),\mathcal{A}(V_m))_{m\in \mathbb{N}})$, respectively.
\par	
We have the following lemma analogous to \cite[Lemma 6.3]{Yu00}.
\begin{lemma}\label{ideal decomp} For each $m\in\mathbb{N}, v_m\in V_m$, r>0, 
	 let $\mathrm{Ball}(v_m,r)=\left\{(t,h)\in \mathbb{R}_+ \times H_m: t^2+\|h-v_m)\|< r^2\right\}$, and $X^m_{i}$, $1\leq i\leq i_0$, be mutually disjoint subsets of $G_m/N_m$. Fix arbitrarily $1< j\leq i_0$.
	Let $\mathnormal{O_r}=(O_{m,r})_{m\in \mathbb{N}}$ and $\mathnormal{O'_r}=(O'_{m,r})_{m\in \mathbb{N}}$ be two sequences of open subsets,  in which for each $ m\in \mathbb{N}$, $$\mathnormal{O}_{m,r}=\mathop\bigcup_{i=1}^{j-1}\left( \mathop \bigcup\limits_{x \in X^m_{i}}\mathrm{Ball}\big( f(x),r \big) \right) \mbox{ \textrm{and} } \mathnormal{O}'_{m,r}=\mathop \bigcup\limits_{x \in X^m_{j}}\mathrm{Ball}\big(f(x),r\big),$$
where $f_m:G_m/N_m\to H_m$ is the coarse embedding. Then for each $r_0>0$ we have
\begin{enumerate}[(1)]
\item  $\lim\limits_{r<r_0,r\to r_0} C_u^*((P_d(G_m),\mathcal{A}(V_m))_{m\in \mathbb{N}})_{\mathnormal{O}_{r}\cup \mathnormal{O}_r'}\\
            =\lim\limits_{r<r_0,r\to r_0}C_u^*((P_d(G_m),\mathcal{A}(V_m))_{m\in \mathbb{N}})_{\mathnormal{O}_r}+\lim\limits_{r<r_0,r\to r_0}C_u^*((P_d(G_m),\mathcal{A}(V_m))_{m\in \mathbb{N}})_{\mathnormal{O}_r'}$;
\item $\lim\limits_{r<r_0,r\to r_0}C_u^*((P_d(G_m),\mathcal{A}(V_m))_{m\in \mathbb{N}})_{\mathnormal{O}_r\cap \mathnormal{O}_r'}\\
            =\lim\limits_{r<r_0,r\to r_0} \big( C_u^*((P_d(G_m),\mathcal{A}(V_m))_{m\in \mathbb{N}})_{\mathnormal{O}_r}\cap C_u^*((P_d(G_m),\mathcal{A}(V_m))_{m\in \mathbb{N}}\big)_{\mathnormal{O}_r'} $;
\item $\lim\limits_{r<r_0,r\to r_0}C_{u,L}^*((P_d(G_m),\mathcal{A}(V_m))_{m\in \mathbb{N}})_{\mathnormal{O}_r\cup \mathnormal{O}_r'}\\
            =\lim\limits_{r<r_0,r\to r_0}C_{u,L}^*((P_d(G_m),\mathcal{A}(V_m))_{m\in \mathbb{N}})_{\mathnormal{O}_r}+\lim\limits_{r<r_0,r\to r_0}C_{u,L}^*((P_d(G_m),\mathcal{A}(V_m))_{m\in \mathbb{N}})_{\mathnormal{O}_r'}$;
\item  $\lim\limits_{r<r_0,r\to r_0}C_{u,L}^*((P_d(G_m),\mathcal{A}(V_m))_{m\in \mathbb{N}})_{\mathnormal{O}_r\cap \mathnormal{O}_r'}\\
            =\lim\limits_{r<r_0,r\to r_0} \big( C_{u,L}^*((P_d(G_m),\mathcal{A}(V_m))_{m\in \mathbb{N}})_{\mathnormal{O}_r}\cap C_{u,L}^*((P_d(G_m),\mathcal{A}(V_m))_{m\in \mathbb{N}}\big) _{\mathnormal{O}_r'} $.
\end{enumerate}
\end{lemma}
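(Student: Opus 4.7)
The plan is to adapt Yu's partition-of-unity argument from \cite[Lemma 6.3]{Yu00} to the sequence setting. The containments $\supseteq$ in parts (1), (3) and $\subseteq$ in parts (2), (4) are immediate from the set-theoretic inclusions between $\mathcal{O}_r$, $\mathcal{O}_r'$, their unions and intersections. Only the opposite containments require work.

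The central tool will be a family of Lipschitz cutoff multipliers. For any pair $r < r' < r_0$, set $u_m(t,h) := \mathrm{dist}((t,h), (\mathcal{O}_{m,r'})^c)$ and $v_m(t,h) := \mathrm{dist}((t,h), (\mathcal{O}_{m,r'}')^c)$ in $\mathbb{R}_+ \times H_m$, and define
\[ \phi_m := \frac{u_m}{u_m + v_m}, \qquad \psi_m := \frac{v_m}{u_m + v_m} \]
on $\mathcal{O}_{m,r'} \cup \mathcal{O}_{m,r'}'$, extending both by zero elsewhere. Then $\phi_m, \psi_m$ are $[0,1]$-valued bounded continuous functions on $\mathbb{R}_+ \times H_m$, $\phi_m$ is supported in $\mathcal{O}_{m,r'}$ and $\psi_m$ in $\mathcal{O}_{m,r'}'$, and $\phi_m + \psi_m \equiv 1$ throughout $\overline{\mathcal{O}_{m,r}} \cup \overline{\mathcal{O}_{m,r}'}$ (since $\overline{\mathcal{O}_{m,r}} \subset \mathcal{O}_{m,r'}$ forces $u_m>0$ there, and analogously for $v_m$). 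Multiplication by $\phi_m$ and $\psi_m$ in the $\mathcal{A}(V_m)\hat{\otimes} K$ coordinate preserves all conditions of Definition \ref{algebraic twisted Roe algebra}; condition (6) is crucial in guaranteeing that the multiplication is well-defined with uniform norm control, since each matrix entry is already supported in a fixed ball of radius $r_2$.

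For part (1), given $T \in C_u^*((P_d(G_m),\mathcal{A}(V_m))_{m\in \mathbb{N}})_{\mathcal{O}_r \cup \mathcal{O}_r'}$, the decomposition $T = \phi \cdot T + \psi \cdot T$ realizes $T$ as a sum of elements of $C_u^*(\ldots)_{\mathcal{O}_{r'}}$ and $C_u^*(\ldots)_{\mathcal{O}_{r'}'}$ respectively; passing to the direct limit $r' \to r_0^-$ yields the claim. For part (2), if $T$ lies in the intersection $C_u^*(\ldots)_{\mathcal{O}_r} \cap C_u^*(\ldots)_{\mathcal{O}_r'}$, then its support is contained in $\overline{\mathcal{O}_r} \cap \overline{\mathcal{O}_r'} \subseteq \mathcal{O}_{r'} \cap \mathcal{O}_{r'}'$, so both summands $\phi \cdot T$ and $\psi \cdot T$ have support in $\mathcal{O}_{r'} \cap \mathcal{O}_{r'}'$, giving $T \in C_u^*(\ldots)_{\mathcal{O}_{r'} \cap \mathcal{O}_{r'}'}$. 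For parts (3) and (4), apply the same time-independent multipliers $\phi_m, \psi_m$ pointwise in $t \in \mathbb{R}_+$: since they act only on the Euclidean coordinate $(t,h)$, they do not affect propagation in $Z^m_d \times Z^m_d$, and the vanishing-propagation condition $R(t) \to 0$ in Definition \ref{twisted localization algebra} is preserved.

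The main technical obstacle is verifying \emph{uniform} estimates across $m \in \mathbb{N}$: the norm bounds, Lipschitz constants, and the combinatorial constants counting overlapping balls must all be independent of $m$, so that the decomposed elements remain in the uniform twisted algebras rather than merely in the (larger) unrestricted direct product. This uniformity is guaranteed by the uniform bounded geometry assumption on $(G_m/N_m)_{m\in\mathbb{N}}$ together with the sharing of common coarse-embedding control functions $\rho_1, \rho_2$ by the family $\{f_m\}_{m\in \mathbb{N}}$.
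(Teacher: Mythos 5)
Your overall strategy---adapting the partition-of-unity decomposition from Yu's Lemma 6.3 in \cite{Yu00}---is the right one, and it is in fact the only argument the paper has in mind: Lemma \ref{ideal decomp} is stated with no proof, only the remark that it is ``analogous to Lemma 6.3 in \cite{Yu00}.'' Your treatment of parts (1) and (3) goes through once multiplication by $\phi_m$ is legitimized. But the argument for parts (2) and (4) contains a genuine logical gap. You pass from ``the $C^*$-algebraic support of $T$ lies in $\overline{\mathcal{O}_r}\cap\overline{\mathcal{O}_r'}\subset \mathcal{O}_{r'}\cap\mathcal{O}_{r'}'$'' directly to ``$T\in C_u^*(\ldots)_{\mathcal{O}_{r'}\cap\mathcal{O}_{r'}'}$''; but the latter is \emph{defined} as the norm closure of an algebraic ideal, and support containment alone does not produce an approximating net of algebraic elements with support in the required set. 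Your $\phi_m,\psi_m$ do not help here either, since $\phi_m T_n$ (for $T_n$ algebraic with support in $\mathcal{O}_r$) is only seen to have support in $\mathcal{O}_{r'}$, not in $\mathcal{O}_{r'}'$. The correct fix is a cutoff $\chi_m'$ which is \emph{identically} $1$ on $\overline{\mathcal{O}_{m,r}'}$ and supported in $\mathcal{O}_{m,r'}'$ (your $\psi_m$ is generally not $1$ anywhere): then from $T=\lim S_n$ with $\mathrm{Supp}(S_n)\subset \mathcal{O}_r'$ one gets $\chi'T=T$, and combining with $T=\lim T_n$ ($\mathrm{Supp}(T_n)\subset \mathcal{O}_r$) gives $T=\lim \chi' T_n$ with $\mathrm{Supp}(\chi' T_n)\subset \mathcal{O}_r\cap\mathcal{O}_{r'}'\subset\mathcal{O}_{r'}\cap\mathcal{O}_{r'}'$.

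Two further points are asserted without justification and are not automatic. First, that multiplication by $\phi_m$ preserves condition (1) of Definition \ref{algebraic twisted Roe algebra}: the entries $T^{(m)}(x,y)$ are required to lie in $\beta_N(\pi(x))(\mathcal{A}(W^m_N(\pi(x))))\hat\otimes K$, while $\phi_m$ lives on all of $\mathbb{R}_+\times H_m$. This works, but only because of an interaction between condition (6), the coarse embedding, and the structure of the Bott maps: on $\mathrm{Ball}(f(\pi(x)),r_2)$ only balls $\mathrm{Ball}(f(z),r')$ with $z$ uniformly close to $\pi(x)$ contribute, so $f(z)\in W^m_{N'}(\pi(x))$ for a uniform $N'$, and each function $(s,h)\mapsto s^2+\|h-f(z)\|^2$ (with $f(z)\in W^m_{N'}$) is the pullback of a function on $\mathbb{R}_+\times W^m_{N'}(\pi(x))$ under the ``collapse'' map $(s,h_\parallel+h_\perp)\mapsto(\sqrt{s^2+\|h_\perp\|^2},h_\parallel)$ built into $\beta_{V_m,W^m_{N'}(\pi(x))}$; hence $\phi_m$ restricted to the relevant support factors through $\mathbb{R}_+\times W^m_{N'}(\pi(x))$, and multiplication preserves condition (1) after enlarging $N$ to $N'$ uniformly in $m,x,y$. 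Second, $u_m,v_m$ are defined as metric distances to complements, which are norm-continuous but not obviously continuous in the weak topology used to topologize $\mathbb{R}_+\times H_m$; you should either build the cutoff directly from the weakly continuous coordinates $(s,h)\mapsto s^2+\|h-f(z)\|^2$ (as the previous remark essentially forces you to), or note that the relevant cutoffs act only on uniformly bounded, hence weakly metrizable, regions.
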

\par
It would be convenient to introduce the following terminology.	
\par
\begin{definition}
Let $r>0$. For each $m\in \mathbb{N}$, a family of open subsets $\{O_{m,r}(i)\}_{i\in J}$ of $\mathbb{R}_+\times H_m$ is said to be {\em $(G_m/N_m,r)$-separate} if: \\
\indent (1) $\mathnormal{O}_{m,r}(i) \cap \mathnormal{O}_{m,r}(j) =\emptyset$ if $i\neq j$; \\
\indent (2) for each $i\in J$, there exists $x_i^m\in G_m$ such that $\mathnormal{O}_{m,r}(i) \subseteq \mathrm{Ball}(f_m(\pi(x_i)),r)$, where
            $f$ is the coarse embedding $f:G_m/N_m\rightarrow H_m$ and
                $$\mathrm{Ball}(f_m(\pi(x_i^m)),r)=\left\{(t,h)\in \mathbb{R}_+\times H_m: t^2+\|h-f_m(\pi(x_i^m))\|^2<r^2\right\}. $$
                
\end{definition}

\par
The following proposition contains a key step in reducing the twisted coarse Baum-Connes conjecture for the sequence $(G_m)_{m\in \mathbb{N}}$ to the coarse Baum-Connes conjecture for the sequence $(N_m)_{m\in \mathbb{N}}$ with non-twisted coefficients.
\par
\begin{proposition}\label {local iso}
Let $r>0$. If $\mathnormal{O_r}=(O_{m,r})_{m\in \mathbb{N}}$ be a sequence of open subsets, where each $O_{m,r}$ is a family of $(G_m/N_m,r)$-separate open subsets of $\mathbb{R}_+\times H_m$. Then
$$e_*: \lim\limits_{d\rightarrow \infty}K_*(C_{u,L}^*((P_d(G_m),\mathcal{A}(V_m))_{m\in \mathbb{N}})_{\mathnormal{O_r}})
        \rightarrow \lim\limits_{d\rightarrow \infty}K_*(C_u^*\left((P_d(G_m),\mathcal{A}(V_m))_{m\in \mathbb{N}}\right)_{\mathnormal{O_r}}) $$
is an isomorphism.
\end{proposition}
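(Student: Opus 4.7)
The plan is to exploit the $(G_m/N_m,r)$-separateness of the open sets to decompose both the twisted Roe ideal and the twisted localization ideal into uniform products of local pieces indexed by the balls, then to identify each local piece (via a contracting homotopy and Bott periodicity) with an untwisted Roe-type algebra over a bounded neighbourhood of a single coset of $N_m$ in $G_m$, and finally to invoke the coarse Baum--Connes conjecture for the sequence $(N_m)_{m\in\mathbb{N}}$, which is coarsely embeddable by hypothesis and hence is covered by \cite{Yu00} and its sequence-version in \cite{DWY21}.

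First, because the sets $\{O_{m,r}(i)\}_{i\in J_m}$ are pairwise disjoint, every $T=(T^{(m)})_{m\in\mathbb{N}}$ in the algebraic ideal admits a canonical decomposition $T=\sum_i T_i$ with $\mathrm{Supp}(T_i^{(m)})\subseteq Z_d^m\times Z_d^m\times O_{m,r}(i)$. Combining conditions (5)--(6) of Definition \ref{algebraic twisted Roe algebra} with the inclusion $O_{m,r}(i)\subseteq \mathrm{Ball}(f_m(\pi(x_i^m)),r)$ and the coarse-embedding inequality $\rho_1(d_{G_m/N_m}(\pi(x),\pi(x_i^m)))\leq \|f_m(\pi(x))-f_m(\pi(x_i^m))\|$, the nonzero entries $T_i^{(m)}(x,y)$ are forced to satisfy $d_{G_m/N_m}(\pi(x),\pi(x_i^m))\leq R_0$ for some $R_0>0$ depending only on $r$, on the uniform propagation constants of the ideal, and on the coarse-embedding parameters. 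This yields a $*$-isomorphism
$$C_u^*((P_d(G_m),\mathcal{A}(V_m))_{m\in\mathbb{N}})_{\mathcal{O}_r}\;\cong\;{\prod}^u_{m,i}\; C^*\bigl(P_d(Y_{m,i}),\mathcal{A}(V_m)\bigr)_{O_{m,r}(i)},$$
where $Y_{m,i}:=\pi^{-1}(B_{R_0}(\pi(x_i^m)))\subseteq G_m$, and an entirely analogous decomposition for the twisted localization ideal; the evaluation-at-zero map respects this decomposition.

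Next, I would apply strong Lipschitz homotopy invariance to each local factor. The ball $\mathrm{Ball}(f_m(\pi(x_i^m)),r)$ is contractible inside the finite-dimensional subspace $W^m_N(\pi(x_i^m))\subseteq V_m$, and the linear contraction to the centre $f_m(\pi(x_i^m))$ implements a homotopy equivalence between $\mathcal{A}(V_m)_{O_{m,r}(i)}$ and the fibre $\mathcal{S}=C_0(\mathbb{R})$ at that centre, which is the local incarnation of the Bott map of \cite{HKT98,Yu00}. On the other hand, after left-multiplication by $(x_i^m)^{-1}$, the metric space $Y_{m,i}$ is isometric to a fixed bounded neighbourhood of $N_m$ in $G_m$, so the family $(Y_{m,i})_{m,i}$ has the same uniform large-scale geometry as $(N_m)_{m\in\mathbb{N}}$. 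Putting these two reductions together, the local twisted algebras and their localization counterparts have $K$-theory naturally isomorphic (up to a Bott-type degree shift that cancels on the two sides of the evaluation map) to the $K$-theory of the uniform non-twisted Roe and localization algebras of the sequence $(N_m)_{m\in\mathbb{N}}$.

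Finally, the hypothesis that $(N_m)_{m\in\mathbb{N}}$ coarsely embeds into Hilbert space, together with the sequence version of Yu's theorem proved in \cite{DWY21}, gives that
$$e_*:\lim_{d\to\infty}K_*\bigl(C_{u,L}^*((P_d(N_m))_{m\in\mathbb{N}})\bigr)\longrightarrow \lim_{d\to\infty}K_*\bigl(C_u^*((P_d(N_m))_{m\in\mathbb{N}})\bigr)$$
is an isomorphism. Transporting this isomorphism back through the identifications above yields the proposition. The main technical obstacle will be to carry out the local identifications uniformly in both $m$ and $i$: one must verify that the strong Lipschitz homotopy contracting each ball $\mathrm{Ball}(f_m(\pi(x_i^m)),r)$ to its centre can be implemented simultaneously across all $m$ and $i$ with uniformly controlled Lipschitz constants, so that the homotopies live in the sequence-level uniform $C^*$-algebras and preserve the uniform propagation, matrix-norm and finite-column conditions in Definitions \ref{algebraic twisted Roe algebra} and \ref{twisted localization algebra}.
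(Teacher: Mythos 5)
Your overall strategy --- decompose over the separated balls, reduce each local piece to an $R$-neighbourhood of a coset $x_i^m N_m$, then use strong Lipschitz homotopy equivalence to pass to $P_d(N_m)$, and finally invoke the coarse embeddability of $(N_m)_{m\in\mathbb{N}}$ --- is the paper's strategy, and the decomposition and homotopy steps are correct modulo one technical inaccuracy: there is no single $R_0$ that works for the whole $C^*$-ideal, because the propagation radius $r_1$ and the support radius $r_2$ in conditions (5)--(6) of Definition \ref{algebraic twisted Roe algebra} are bounds attached to individual elements, not uniform constants of the algebra. The paper therefore writes each local factor as an inductive limit $\lim_{R\to\infty} C^*\big(P_d(\mathcal{N}_{G_m}(x_i^m N_m, R)), \mathcal{A}(V_m)\big)_{O_{m,r}(i)}$ and interchanges this limit with the limit over $d$; your fixed-$Y_{m,i}$ isomorphism needs to be replaced by this limit.

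The more serious gap is in your second paragraph. You claim the linear contraction of $\mathrm{Ball}(f_m(\pi(x_i^m)),r)$ to its centre implements a homotopy equivalence between the coefficient algebra $\mathcal{A}(V_m)_{O_{m,r}(i)}$ and $\mathcal{S}=C_0(\mathbb{R})$. This is not true: $\mathcal{A}(V_m)$ is an infinite-dimensional direct limit, and the identification of its $K$-theory with that of $\mathcal{S}$ is the Higson--Kasparov--Trout Bott periodicity theorem, realized by an \emph{asymptotic} morphism $\beta_t$, not by a strong Lipschitz homotopy or a $*$-homomorphism. There is no homotopy equivalence of $C^*$-algebras here, and pulling the asymptotic Bott isomorphism inside this proposition (with the attendant naturality, degree-shift, and uniformity-in-$(m,i)$ bookkeeping) would substantially complicate the argument and risk circularity with the Bott step that must be performed again in Section 5. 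The paper avoids this entirely: once you have reduced to Roe and localization algebras over $P_d(N_m)$, the twisting condition (1) in Definition \ref{algebraic twisted Roe algebra} becomes vacuous because $\pi(x)$ is constant on each coset, so the coefficients are simply the fixed $C^*$-algebra $\prod_{i\in J}\mathcal{A}(V_m)_{O_{m,r}(i)}$, independent of the base point in $P_d(N_m)$. One then quotes (the proof of) Yu's theorem for the coarse Baum--Connes conjecture with coefficients in an arbitrary $C^*$-algebra for the coarsely embeddable sequence $(N_m)_{m\in\mathbb{N}}$, with no need to compute or simplify the $K$-theory of the coefficient algebra at all. You should replace your Bott-stripping step with this observation.
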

\begin{proof}
We first discuss the structure of the $C^*$-algebra $C_u^*\left((P_d(G_m),\mathcal{A}(V_m))_{m\in \mathbb{N}}\right)_{\mathnormal{O_r}}$.
\par		
Let $T=(T^{(m)})_{m\in \mathbb{N}} \in \mathbb{C}_u[(P_d(G_m),\mathcal{A}(V_m))_{m\in \mathbb{N}}]$. For each $m\in \mathbb{N}$, since $\mathnormal{O_{m,r}}$ is the union of a family of  disjoint open subsets $\{\mathnormal{O}_{m,r}(i)\}_{i\in J}$ of $\mathbb{R}_+\times H_m$, the operator $T^{(m)}$ has a formal decomposition
$T^{(m)}=\sum_{i\in J} T^{(m)}_i$, where $T^{(m)}_i:=T^{(m)}|_{O_{m,r}(i)}$ is the restriction of $T^{(m)}$ on the component $O_{m,r}(i)$, so that $\mathrm{Supp}(T^{(m)}_i)\subseteq Z^m_d\times Z^m_d\times\mathnormal{O}_{m,r}(i)$.
It follows that the correspondence
$$T=(T^{(m)})_{m\in \mathbb{N}} \mapsto ((T^{(m)}_i)_{i\in J})_{m\in \mathbb{N}}$$
gives rise to a $*$-isomorphism
$$\mathbb{C}_u[(P_d(G_m),\mathcal{A}(V_m))_{m\in \mathbb{N}}]\cong \prod^u_{m\in \mathbb{N}}\prod_{i\in J}^u
		\mathbb{C}[P_d(G_m),\mathcal{A}(V_m)]_{\mathnormal{O_{m,r}(i)}}\cong \prod^u_{\substack{i\in J \\ m\in \mathbb{N}}}
		\mathbb{C}[P_d(G_m),\mathcal{A}(V_m)]_{\mathnormal{O_{m,r}(i)}}$$
where we add $u$ on the direct product symbol to emphasize that the family of operators $T^{(m)}_i$, $i\in J, m\in \mathbb{N}$, satisfy the conditions of
Definition \ref{algebraic twisted Roe algebra} with uniform constants.
\par
Since $\{O_{m,r}(i)\}_{i\in J}$ is a family of $(G_m/N_m,r)$-separate open subsets of $\mathbb{R}_+\times H_m$,
there exists $x_i^m\in G_m$ such that $\mathnormal{O}_{m,r}(i) \subseteq \mathrm{Ball}(f_m(\pi(x^m_i)),r)$ for each $i\in J$.
Note that the condition (6) in Definition \ref{algebraic twisted Roe algebra} and the coarse embedding property of the sequence $(G_m/N_m)_{m\in \mathbb{N}}$ imply that for each $m\in \mathbb{N}$, $d>0$ and $i\in J$,
$$C^*(P_d(G_m),\mathcal{A}(V_m))_{\mathnormal{O_{m,r}(i)}}=\lim\limits_{R\rightarrow \infty}
C^*\Big( P_d\big( \mathcal{N}_{G_m}(x_i^mN_m,R)\big), \mathcal{A}(V_m)\Big)_{\mathnormal{O_{m,r}(i)}},$$
where $\mathcal{N}_{G_m}(x_i^mN_m,R)$ denotes the $R$-neighborhood of the coset $x_i^m N_m$ in $G_m$. More concretely,
for each $i\in J$ and $R>0$, there exists
an integer $N_R>0$, and $g^{(i)}_1, g^{(i)}_2, \cdots, g^{(i)}_{N_R}$ in $G_m$, such that
$$ \mathcal{N}_{G_m}(x_i^mN_m,R)=\mathop{\sqcup}\limits_{k=1}^{N_R}g_k^{(i)}N_m, $$
where $g_k^{(i)}N_m \in G_m/N_m$ satisfies $d_{G_m/N_m}(\pi(g_k^{(i)}),\pi(x_i^m))< R$. In other words, for each $k$ there exists $h_k^{(i)}\in N_m$ such that $d_G(g_k^{(i)},x_i^mh_k^{(i)})< R$.
Note that for each fixed $R>0$, the uniform bounded geometry property of the sequence $(G_m/N_m)_{m\in \mathbb{N}}$  implies that the integer $N_R$ is independent of $m\in \mathbb{N}$ and $i\in J$. Sincce the family of coarse embeddings $f_m:G_m/N_m \rightarrow H_m$ are controlled by two common non-decreasing functions,
it follows that 
$$C_u^*((P_d(G_m),\mathcal{A}(V_m))_{m\in \mathbb{N}})_{\mathnormal{O_r}}=\lim\limits_{R\rightarrow \infty}\prod^u_{\substack{i\in J \\ m\in \mathbb{N}}}
C^*\Big( P_d \big( \mathcal{N}_{G_m}(x_i^mN_m,R)\big) ,\mathcal{A}(V_m)\Big)_{\mathnormal{O_{m,r}(i)}}$$
and
$$C_{u,L}^*\left((P_d(G_m),\mathcal{A}(V_m))_{m\in \mathbb{N}}\right)_{\mathnormal{O_r}}=\lim\limits_{R\rightarrow \infty}\prod^u_{\substack{i\in J \\ m\in \mathbb{N}}} C_{L}^*\Big(P_d\big(\mathcal{N}_{G_m}(x_i^mN_m,R)\big),\mathcal{A}(V_m)\Big)_{\mathnormal{O_{m,r}(i)}}.$$	
Hence, to prove the proposition, we only need to prove that
\begin{equation*}
	\begin{aligned}	
e_*: \lim\limits_{d\rightarrow \infty}\lim\limits_{R\rightarrow \infty} K_*\Bigg(\prod^u_{\substack{i\in J \\ m\in \mathbb{N}}} C_{L}^*\Big(P_d\big(\mathcal{N}_{G_m}(x_i^mN_m &,R)\big),\mathcal{A}(V_m)\Big)_{\mathnormal{O_{m,r}(i)}}\Bigg)
\rightarrow\\
 &\lim\limits_{d\rightarrow \infty} \lim\limits_{R\rightarrow \infty}K_*\Bigg(\prod^u_{\substack{i\in J \\ m\in \mathbb{N}}}
C^*\Big( P_d \big( \mathcal{N}_{G_m}(x_i^mN_m,R)\big) ,\mathcal{A}(V_m)\Big)_{\mathnormal{O_{m,r}(i)}}\Bigg)
\end{aligned}
\end{equation*}
is an isomorphism.
\par		
Note that for any fixed $d>0$, if $R<R'$ then $\mathcal{N}_{G_m}(x_i^mN_m,R)\subseteq \mathcal{N}_{G_m}(x_i^mN_m,R')$ for each $m\in \mathbb{N}$ and $i\in J$, so that we have natural inclusions
$$\prod^u_{\substack{i\in J \\ m\in \mathbb{N}}}
C^*\Big( P_d \big( \mathcal{N}_{G_m}(x_i^mN_m,R)\big) ,\mathcal{A}(V_m)\Big)_{\mathnormal{O_{m,r}(i)}} \subseteq
\prod^u_{\substack{i\in J \\ m\in \mathbb{N}}}
C^*\Big( P_d \big( \mathcal{N}_{G_m}(x_i^mN_m,R')\big) ,\mathcal{A}(V_m)\Big)_{\mathnormal{O_{m,r}(i)}}$$
and
$$\prod^u_{\substack{i\in J \\ m\in \mathbb{N}}} C_{L}^*\Big(P_d\big(\mathcal{N}_{G_m}(x_i^mN_m,R)\big),\mathcal{A}(V_m)\Big)_{\mathnormal{O_{m,r}(i)}} \subseteq
\prod^u_{\substack{i\in J \\ m\in \mathbb{N}}} C_{L}^*\Big(P_d\big(\mathcal{N}_{G_m}(x_i^mN_m,R')\big),\mathcal{A}(V_m)\Big)_{\mathnormal{O_{m,r}(i)}}.$$
On the other hand, for any fixed $R>0$, if $d<d'$, we have natural inclusions
$$\prod^u_{\substack{i\in J \\ m\in \mathbb{N}}} C^*\Big(P_d\big(\mathcal{N}_{G_m}(x_i^mN_m,R)\big),\mathcal{A}(V_m)\Big)_{\mathnormal{O_{m,r}(i)}} \subseteq \prod^u_{\substack{i\in J \\ m\in \mathbb{N}}} C^*\Big(P_{d'}\big(\mathcal{N}_{G_m}(x_i^mN_m,R)\big),\mathcal{A}(V_m)\Big)_{\mathnormal{O_{m,r}(i)}}$$ and $$\prod^u_{\substack{i\in J \\ m\in \mathbb{N}}} C_{L}^*\Big(P_d\big(\mathcal{N}_{G_m}(x_i^mN_m,R)\big),\mathcal{A}(V_m)\Big)_{\mathnormal{O_{m,r}(i)}} \subseteq \prod^u_{\substack{i\in J \\ m\in \mathbb{N}}} C_{L}^*\Big(P_{d'}\big(\mathcal{N}_{G_m}(x_i^mN_m,R)\big),\mathcal{A}(V_m)\Big)_{\mathnormal{O_{m,r}(i)}}.$$
Therefore, we can change the order of inductive limits on $K$-theory groups:
\begin{equation*}
\begin{split}
 \lim\limits_{d\rightarrow \infty}\lim\limits_{R\rightarrow \infty} K_*\Bigg(\prod^u_{\substack{i\in J \\ m\in \mathbb{N}}} C^*\Big(P_d\big(\mathcal{N}_{G_m}(x_i^mN_m,&R)\big), \mathcal{A}(V_m)\Big)_{\mathnormal{O_{m,r}(i)}}\Bigg)
\cong\\
& \lim\limits_{R\rightarrow \infty} \lim\limits_{d\rightarrow \infty}K_*\Bigg(\prod^u_{\substack{i\in J \\ m\in \mathbb{N}}}
C^*\Big( P_d \big( \mathcal{N}_{G_m}(x_i^mN_m,R)\big) ,\mathcal{A}(V_m)\Big)_{\mathnormal{O_{m,r}(i)}}\Bigg)
\end{split}
\end{equation*}
and

\begin{equation*}
\begin{split}
 \lim\limits_{d\rightarrow \infty}\lim\limits_{R\rightarrow \infty} K_*\Bigg(\prod^u_{\substack{i\in J \\ m\in \mathbb{N}}} C_{L}^*\Big(P_d\big(\mathcal{N}_{G_m}(x_i^mN_m,&R)\big), \mathcal{A}(V_m)\Big)_{\mathnormal{O_{m,r}(i)}}\Bigg)
\cong\\
& \lim\limits_{R\rightarrow \infty} \lim\limits_{d\rightarrow \infty}K_*\Bigg(\prod^u_{\substack{i\in J \\ m\in \mathbb{N}}}
C_L^*\Big( P_d \big( \mathcal{N}_{G_m}(x_i^mN_m,R)\big) ,\mathcal{A}(V_m)\Big)_{\mathnormal{O_{m,r}(i)}}\Bigg)
\end{split}
\end{equation*}
\par
As a result, to prove the proposition, it suffices to prove that, for each fixed $R>0$,

\begin{equation*}
\begin{split}
e_*:  \lim\limits_{d\rightarrow \infty} K_*\Bigg(\prod^u_{\substack{i\in J \\ m\in \mathbb{N}}} C_{L}^*\Big(P_d\big(\mathcal{N}_{G_m}(x_i^mN_m,R)\big),&\mathcal{A}(V_m)\Big)_{\mathnormal{O_{m,r}(i)}}\Bigg)
\rightarrow\\
& \lim\limits_{d\rightarrow \infty}K_*\Bigg(\prod^u_{\substack{i\in J \\ m\in \mathbb{N}}}
C^*\Big( P_d \big( \mathcal{N}_{G_m}(x_i^mN_m,R)\big) ,\mathcal{A}(V_m)\Big)_{\mathnormal{O_{m,r}(i)}}\Bigg)
\end{split}
\end{equation*}
is an isomorphism.
	
Note that when $d$ is large enough, for each $m\in \mathbb{N}$, $P_d(\mathcal{N}_{G_m}(x_i^mN_m,R))$ as a subcomplex of $P_d(G_m)$ is strongly Lipschitz homotopy equivalent to
$P_d(x_i^mN_m)$ by a linear homotopy. More precisely, for each $h\in N_m$, there exists $\widetilde{h}\in N_m$ such that
$$d_{P_d(G_m)}(g_k^{(i)}h, x_i \widetilde{h} h_k^{(i)})=d_{P_d(G_m)}(g_k^{(i)}, x_i h_k^{(i)})<R$$
for each $g_k^{(i)}N\subseteq \mathcal{N}_{G_m}(x_i^mN_m,R)$, so that we can connect
$g_k^{(i)}h\in g_k^{(i)}N_m$ with $x_i \widetilde{h} h_k^{(i)}\in x_i N_m$ by a line segment for each $h\in N_m$. Since $x_i N_m$ is isometric to $N_m$,
it follows that $P_d(\mathcal{N}_{G_m}(x_i^mN_m,R))$ as a subcomplex of $P_d(G_m)$ is strongly Lipschitz homotopy equivalent to
$P_d(N_m)$ for all $m\in \mathbb{N}$ and $i\in J$. Moreover, the constants controlling the strong Lipschitz homotopy equivalences are independent of $m\in \mathbb{N}$ and $i\in J$,
because the metric on the Rips complex $P_d(G_m)$ is symmetric/homogeneous.
It follows from the strong Lipschitz homotopy invariance for $K$-theory of Roe algebras and localization algebras
that the inclusion maps $P_d(x_i^mN_m) \rightarrow P_d(\mathcal{N}_{G_m}(x_i^mN_m,R))$ for each $m\in \mathbb{N}$ induce the following commutative diagram
\begin{small}

\begin{equation*}
\xymatrix{
\lim\limits_{d\rightarrow \infty} K_*\Big(\prod^u\limits_{\substack{i\in J \\ m\in \mathbb{N}}} C_{L}^*(P_d(N_m)),\mathcal{A}(V_m))_{\mathnormal{O_{m,r}(i)}}\Big) \ar[d]^{\cong}\ar[r]^{e_*} & \lim\limits_{d\rightarrow \infty} K_*\Big(\prod^u\limits_{\substack{i\in J \\ m\in \mathbb{N}}} C^*(P_d(N_m)),\mathcal{A}(V_m))_{\mathnormal{O_{m,r}(i)}}\Big)\ar[d]^\cong\\
\lim\limits_{d\rightarrow \infty} K_*\Big(\prod^u\limits_{\substack{i\in J \\ m\in \mathbb{N}}} C_{L}^*(P_d(\mathcal{N}_{G_m}(x_i^mN_m)),\mathcal{A}(V_m))_{\mathnormal{O_{m,r}(i)}}\Big) \ar[d]^{\cong}\ar[r]^{e_*} & \lim\limits_{d\rightarrow \infty} K_*\Big(\prod^u\limits_{\substack{i\in J \\ m\in \mathbb{N}}} C^*(P_d(\mathcal{N}_{G_m}(x_i^mN_m)),\mathcal{A}(V_m))_{\mathnormal{O_{m,r}(i)}}\Big) \ar[d]^{\cong}\\
\lim\limits_{d\rightarrow \infty} K_*\Big(\prod^u\limits_{\substack{i\in J \\ m\in \mathbb{N}}} C_{L}^*(P_d(\mathcal{N}_{G_m}(x_i^mN_m,R)),\mathcal{A}(V_m))_{\mathnormal{O_{m,r}(i)}}\Big)\ar[r]^{e_*} & \lim\limits_{d\rightarrow \infty} K_*(\prod^u\limits_{\substack{i\in J \\ m\in \mathbb{N}}} C^*(P_d(\mathcal{N}_{G_m}(x_i^mN_m,R)),\mathcal{A}(V_m))_{\mathnormal{O_{m,r}(i)}}\Big)
}
\end{equation*}
\end{small}
where the vertical maps are isomorphisms by strongly Lipschitz homotopy invariance.
\par
Therefore, to prove the proposition, it suffices to prove that
$$ e_*: \lim\limits_{d\rightarrow \infty} K_*\Bigg(\prod^u\limits_{\substack{i\in J \\ m\in \mathbb{N}}} C_{L}^*(P_d(N_m)),\mathcal{A}(V_m))_{\mathnormal{O_{m,r}(i)}}\Bigg) \rightarrow \lim\limits_{d\rightarrow \infty} K_*\Bigg(\prod^u\limits_{\substack{i\in J \\ m\in \mathbb{N}}} C^*(P_d(N_m)),\mathcal{A}(V_m))_{\mathnormal{O_{m,r}(i)}}\Bigg)$$
is an isomorphism.
\par		
Note that we have now arrived at the Roe algebras of $P_d(N_m)$ with non-twisted
coefficients $\prod\limits_{i\in J}\mathcal{A}_{\mathnormal{O_{m,r}(i)}}$. Namely, we have
$$\prod^u\limits_{\substack{i\in J \\ m\in \mathbb{N}}} C^*(P_d(N_m)),\mathcal{A}(V_m))_{\mathnormal{O_{m,r}(i)}}\cong
\prod^u\limits_ {m\in \mathbb{N}} C^*(P_d(N_m)),\prod\limits_{i\in J}\mathcal{A}(V_m)_{\mathnormal{O_{m,r}(i)}}) $$
and
$$\prod^u\limits_{\substack{i\in J \\ m\in \mathbb{N}}} C_{L}^*(P_d(N_m)),\mathcal{A}(V_m))_{\mathnormal{O_{m,r}(i)}}\cong
\prod^u\limits_{m\in \mathbb{N}} C_{L}^*(P_d(N_m)),\prod\limits_{i\in J}\mathcal{A}(V_m)_{\mathnormal{O_{m,r}(i)}})$$
where $\prod\limits_{i\in J}\mathcal{A}_{\mathnormal{O_{m,r}(i)}}=\left\{(a_i)_{i\in J}: a_i\in \mathcal{A}_{\mathnormal{O_{m,r}(i)}}, \sup\limits_{i\in J}\|a_i\|< +\infty\right\}$ for each $m\in \mathbb{N}$.
\par		
Since the sequence of  normal subgroups $(N_m)_{m\in \mathbb{N}}$ with the proper metric $d_{N_m}$ induced from the sequence $(G_m)_{m\in \mathbb{N}}$ is a sequence of metric spaces with uniform bounded geometry, and admits a coarse
embedding into Hilbert space by assumption, we can appeal to the same proof of the celebrate Theorem 1.1 of G. Yu \cite{Yu00} to conclude that
$$e_*: \lim\limits_{d \rightarrow \infty}K_*\Bigg( \prod^u\limits_{m\in \mathbb{N}} C_{L}^*(P_d(N_m)),\prod\limits_{i\in J}\mathcal{A}(V_m)_{\mathnormal{O_{m,r}(i)}}) \Bigg)
\rightarrow \lim\limits_{d \rightarrow \infty}
K_*\Bigg(\prod^u\limits_{m\in \mathbb{N}} C^*(P_d(N_m)),\prod\limits_{i\in J}\mathcal{A}(V_m)_{\mathnormal{O_{m,r}(i)}}) \Bigg) $$
is an isomorphism.	Consequently, we have that
$$e_*: \lim\limits_{d\rightarrow \infty}K_*(C_{u,L}^*((P_d(G_m),\mathcal{A}(V_m))_{m\in \mathbb{N}})_{\mathnormal{O_r}})
\rightarrow \lim\limits_{d\rightarrow \infty}K_*(C_u^*\left((P_d(G_m),\mathcal{A}(V_m))_{m\in \mathbb{N}}\right)_{\mathnormal{O_r}}) $$
is an isomorphism, as desired. The proof is complete.
\end{proof}
\par
Now we are ready to prove the main result of this section:
\par
\begin{proof}[Proof of Theorem \ref{twisted iso}:]
For any $r>0$, we define $\mathnormal{O_{m,r}}\subseteq \mathbb{R}_+\times H_m$ by
$$\mathnormal{O_{m,r}}=\bigcup\limits_{x \in G_m/N_m}\mathrm{Ball}(f(x),r),$$
where $f_m:G_m/N_m \rightarrow H_m$ is the coarse embedding, and
$$\mathrm{Ball}(f(x),r):=\{(t,h)\in \mathbb{R}_+\times H_m:t^2+\|h-f(x)\|^2<r\}.$$
Denote $O_r=(O_{m,r})_{m\in \mathbb{N}}$. By definition, we have $$C_u^*((P_d(G_m),\mathcal{A}(V_m))_{m\in \mathbb{N}})=\lim\limits_{r\rightarrow \infty}C_u^*((P_d(G_m),\mathcal{A}(V_m))_{m\in \mathbb{N}})_{\mathnormal{O_{r}}},$$
$$C_{u,L}^*((P_d(G_m),\mathcal{A}(V_m))_{m\in \mathbb{N}})=\lim\limits_{r\rightarrow \infty}C_{u,L}^*((P_d(G_m),\mathcal{A}(V_m))_{m\in \mathbb{N}})_{\mathnormal{O_{r}}}.$$
It is straightforward to check that
$$\lim\limits_{d\rightarrow \infty}\lim\limits_{r\rightarrow \infty}K_*(C_u^*((P_d(G_m),\mathcal{A}(V_m))_{m\in \mathbb{N}})_{\mathnormal{O_{r}}})
=\lim\limits_{r\rightarrow \infty}\lim\limits_{d\rightarrow \infty}K_*(C_u^*((P_d(G_m),\mathcal{A}(V_m))_{m\in \mathbb{N}})_{\mathnormal{O_{r}}}),$$
$$\lim\limits_{d\rightarrow \infty}\lim\limits_{r\rightarrow \infty}K_*(C_{u,L}^*((P_d(G_m),\mathcal{A}(V_m))_{m\in \mathbb{N}})_{\mathnormal{O_{r}}})
=\lim\limits_{r\rightarrow \infty}\lim\limits_{d\rightarrow \infty}K_*(C_{u,L}^*((P_d(G_m),\mathcal{A}(V_m))_{m\in \mathbb{N}})_{\mathnormal{O_{r}}}).$$
So it suffices to show that, for any $r_0>0$,
$$e_*: \lim\limits_{d \rightarrow \infty}K_*\Big(\lim\limits_{r<r_0,r\to r_0} C_{u,L}^*((P_d(G_m),\mathcal{A}(V_m))_{m\in \mathbb{N}})_{\mathnormal{O_{r}}} \Big)
\rightarrow \lim\limits_{d \rightarrow \infty}K_*\Big( \lim\limits_{r<r_0,r\to r_0} C_{u}^*((P_d(G_m),\mathcal{A}(V_m))_{m\in \mathbb{N}})_{\mathnormal{O_{r}}}  \Big)$$
is an isomorphism.
\par
Since the sequence $(G_m/N_m)_{m\in \mathbb{N}}$ has uniform bounded geometry and the family of coarse embeddings $f_m:G_m/N_m \rightarrow H_m$ are controlled by two common non-decreasing functions,  for any $r_0>0$ and $m\in \mathbb{N}$, there exist finitely many, say $k_0$ independent of $m$, mutually disjoint subsets $\{X_{m,k}\}_{k=1}^{k_0}$ of $G_m/N_m$ such that $G_m/N_m=\mathop{\sqcup}\limits_{k=1}^{k_0}X_{m,k}$, and for each $1\leq k\leq k_0$,
we have $d\big(f(x),f(x')\big)>3r_0$ for any two distinct elements $x, x'\in X_{m,k}$.
\par
For any $0<r<r_0$ and each $1\leq k\leq k_0$, let
$$\mathnormal{O_{m,r,k}}=\mathop{\bigcup}\limits_{x\in X_{m,k}}\mathrm{Ball}(f(x),r).$$
Then $\mathnormal{O_{m,r}}=\mathop{\cup}\limits_{k=1}^{k_0}\mathnormal{O_{m,r,k}}$. Moreover, every $\mathnormal{O_{m,r,k}}$ and
$$\mathnormal{O_{m,r,k}} \cap \big( \cup_{j=1}^{k-1}  \mathnormal{O_{m,r,j}}\big) $$
are $(G_m/N_m, r)$-separate for any $1< k\leq k_0$. By Proposition \ref{local iso} and a Mayer-Vietoris sequence argument, we have
$$e_*:\lim\limits_{d\rightarrow \infty}K_*(C_{u,L}^*((P_d(G_m),\mathcal{A}(V_m))_{m\in \mathbb{N}})) \rightarrow \lim\limits_{d\rightarrow \infty}K_*(C_u^*((P_d(G_m),\mathcal{A}(V_m))_{m\in \mathbb{N}}))$$
is an isomorphism. This completes the proof.
\end{proof}
	
\section{The Bott maps and proof of the main theorem}
In this section, we shall prove the main theorem of this paper. We shall construct an asymptotic morphism $\beta$ from
$\mathcal{S}\hat{\otimes}C_u^*((P_d(G_m))_{m\in \mathbb{N}})$ to $C_u^*((P_d(G_m),\mathcal{A}(V_m))_{m\in \mathbb{N}})$, which plays the role of the Bott map in the geometric analogue, as proved by G. Yu \cite{Yu00}, of infinite dimensional Bott periodicity introduced by Higson, Kasparov and Trout in \cite{HKT98}. This geometric analogue of
the infinite dimensional Bott periodicity is then used to reduce the coarse Novikov conjecture for the sequence of extensions of coarsely embeddable groups $(N_m)_{m\in \mathbb{N}}$ and $(G_m/N_m)_{m\in \mathbb{N}}$ to the twisted coarse Baum-Connes conjecture for the sequence $(G_m)_{m\in \mathbb{N}}$ with coefficients coming from a family of coarse embeddings of quotient groups $(G_m/N_m)_{m\in \mathbb{N}}$ to Hilbert spaces. We thus complete the proof of Theorem \ref{main result} by recalling Proposition \ref{twisted iso} of the previous section.
\par
Let $(1\to N_m\to G_m\to G_m/N_m\to 1)_{m\in \mathbb{N}}$ be a sequence of extensions of countable discrete groups such that $(N_m)_{m\in \mathbb{N}}$ and $(G_m/N_m)_{m\in \mathbb{N}}$
are coarsely embeddable into Hilbert space. Choose a family of proper length function $(l_{G_m})_{m\in \mathbb{N}}$ on $(G_m)_{m\in \mathbb{N}}$ to endow three  sequences of groups left  invariant metrics $d_{N_m}$, $d_{G_m}$ and $d_{G_m/N_m}$
 as before.
Let $\pi: G_m\rightarrow G_m/N_m$ be the quotient map, and let $f_m: G_m/N_m \rightarrow H_m$ be a coarse embedding of $G_m/N_m$ into a real Hilbert space $H_m$ for each $m\in \mathbb{N}$.
Recall that for each $n\in \mathbb{N}$ and $x\in G_m/N_m$, we define
$$W^m_n(x):=\mbox{span} \{f(y): y\in G_m/N_m, d_{G_m/N_m}(x,y)\leq n^2\}, $$
$$V_m:=W_m(x):=\cup_{n\in \mathbb{N}} W^m_n(x)$$
and, without loss of generality, assume that $V_m$ is dense in $H_m$. For each $d >0$, the map $f: G_m/N_m\to H_m$ is extended to the Rips complex
$f: P_d({G_m/N_m})\to H_m$. Choose a countable dense subset $Z^m_d$ of $P_d(G_m)$ such that $\pi (Z^m_d)$ is dense in $P_d(G_m/N_m)$, and $Z^m_{d} \subseteq Z^m_{d'}$ if
$d \leq d'$. For each $z\in Z^m_d$ with $\pi(z)=\sum_{x\in G_m/N_m} c_x x$, we define
$W^m_N(\pi(z))$ to be the Euclidean subspace of $V_m$ spanned by $W^m_n(x)$ for those $x$ such that $c_x\neq 0$.
\par
For each $m\in \mathbb{N}$ and $x\in Z^m_d$, and any finite-dimensional affine subspace $W_m$ of $V_m$ containing $f_m(\pi(x))$, recall that in
Definition \ref{HKT Bott map} we define a $*$-homomorphism
$$\beta_{W_m}(x):\mathcal{S}:=C_0(\mathbb{R}) \rightarrow \mathcal{A}(W_m)$$
by the formula
$$\beta_{W_m}(x)(g)=g\Big( X\hat{\otimes}1+1\hat{\otimes}C_{W_m,f_m(\pi(x))} \Big)$$
for all $g\in \mathcal{S}$, where $X$ is the degree one and unbounded multiplier of $\mathcal{S}$ defined by
$(X(g))(x)=xg(x)$ for each $g\in \mathcal{S}$ with compact support and all $x\in \mathbb{R}$,
and $C_{W_m,f_m(\pi(x))}$ is the Clifford algebra-valued function on $W_m$ defined by
$$C_{W_m,f_m(\pi(x))}(v)=v-f(\pi(x))\in W_m^{0} \subset \text{Cliff}(W_m^0)$$
for all $v\in W_m$. Taking inductive limit as in Definition \ref{HKT Bott map}, these maps $\beta_{W_m}(x)$ for all $W_m$ as above
induce a $*$-homomorphism
$$\beta_m(x):\mathcal{S} \rightarrow \mathcal{A}(V_m).$$
\par
\begin{definition}\label{Bott asymptotic morphisms}
Let $d\geq 0$. For each $t\in [1, \infty)$, we define a map
$$\beta_t:\mathcal{S}\hat{\otimes}\mathbb{C}_u[(P_d(G_m))_{m\in \mathbb{N}}] \rightarrow C_u^*((P_d(G_m),\mathcal{A}(V_m))_{m\in \mathbb{N}})$$
by the formula
$$\big( \beta_t(g\hat{\otimes}T) \big)^{(m)}(x,y)=\beta_m(x)(g_t)\hat{\otimes}T^{(m)}(x,y)$$
for all $g\in \mathcal{S}, T=(T^{(m)})_{m\in \mathbb{N}} \in \mathbb{C}_u[(P_d(G_m))_{m\in \mathbb{N}}]$ and $x, y\in Z^m_d$, where $g_t(\tau)=g(t^{-1}\tau)$ for all $\tau\in \mathbb{R}$,
and $\beta_m(x): \mathcal{S} \rightarrow \mathcal{A}(V_m)$ is the $*$-homomorphism associated to the inclusion of the zero-dimensional affine
space $f_m(\pi(x))$ into $V_m$  defined above.
\end{definition}
\par
Similarly, we have the following definition associated to the uniform localization algebras.	
\par
\begin{definition}
Let $d\geq 0$. For each $t\in [1, \infty)$, we define a map
$$(\beta_{u,L})_t:\mathcal{S}\hat{\otimes}\mathbb{C}_{u,L}[(P_d(G_m))_{m\in \mathbb{N}}] \rightarrow C^*_{u,L}((P_d(G_m),\mathcal{A}(V_m))_{m\in \mathbb{N}})$$
by the formula
$$\Big((\beta_{u,L})_t(g\hat{\otimes}h)\Big)(\tau)=\beta_t\Big( g\hat{\otimes}h(\tau) \Big)$$
for all $g\in \mathcal{S}$, $h\in \mathbb{C}_{u,L}[(P_d(G_m))_{m\in \mathbb{N}}]$ and $\tau\in [0, \infty)$, where
$\beta_t$ is defined in Definition \ref{Bott asymptotic morphisms} above.
\end{definition}
\par
We have the following result analogous to Lemma 7.6 in \cite{Yu00}.
\begin{lemma} \label{Bott}
For each $d\geq 0$, the maps $(\beta_t)_{t\geq 1}$ and $\big( (\beta_{u,L})_t \big)_{t\geq 1}$ extend to asymptotic morphisms
$$\beta:\mathcal{S}\hat{\otimes}C_u^*((P_d(G_m))_{m\in \mathbb{N}})\rightsquigarrow C_u^*((P_d(G_m),\mathcal{A}(V_m))_{m\in \mathbb{N}}),$$
$$\beta_{u,L}:\mathcal{S}\hat{\otimes}C_{L}^*((P_d(G_m))_{m\in \mathbb{N}})\rightsquigarrow C_{u,L}^*((P_d(G_m),\mathcal{A}(V_m))_{m\in \mathbb{N}}).$$
\end{lemma}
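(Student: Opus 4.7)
My plan is to follow the strategy of \cite[Lemma 7.6]{Yu00}, keeping every estimate uniform in the index $m\in\mathbb{N}$ by exploiting the common constants built into Definition \ref{algebraic twisted Roe algebra} and the common control functions $\rho_1,\rho_2$ of the family of coarse embeddings $f_m$.

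First I would check that for each $t\geq 1$ and each elementary tensor $g\hat\otimes T$ with $g\in\mathcal{S}$ and $T=(T^{(m)})_{m\in\mathbb{N}}\in\mathbb{C}_u[(P_d(G_m))_{m\in\mathbb{N}}]$, the element $\beta_t(g\hat\otimes T)$ lies in $\mathbb{C}_u[(P_d(G_m),\mathcal{A}(V_m))_{m\in\mathbb{N}}]$. Conditions (2)--(5) of Definition \ref{algebraic twisted Roe algebra} are inherited directly from those on $T$ since each $\beta_m(x)$ is a $*$-homomorphism with $\|\beta_m(x)(g_t)\|\leq\|g\|$; condition (1) holds with a uniform $N$ obtained from the propagation of $T$ together with the compatibility property of the nested family $W^m_n$; and condition (6) is the classical fact that $\mathrm{Supp}\bigl(\beta_m(x)(g_t)\bigr)\subseteq\mathrm{Ball}(f_m(\pi(x)),r_{g,t})$ for some constant $r_{g,t}$ depending only on $g$ and $t$. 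The same verification works pointwise in $\tau$ for the localization version, giving the conditions of Definition \ref{twisted localization algebra}.

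The key step is asymptotic multiplicativity. On elementary tensors, a direct computation of matrix entries gives
\[
\bigl(\beta_t(g\hat\otimes T)\beta_t(g'\hat\otimes T')-\beta_t((gg')\hat\otimes(TT'))\bigr)^{(m)}(x,y)=\sum_z\beta_m(x)(g_t)\bigl[\beta_m(z)(g'_t)-\beta_m(x)(g'_t)\bigr]\hat\otimes T^{(m)}(x,z)\,T'^{(m)}(z,y).
\]
On the dense set of smooth, compactly supported generators of $\mathcal{S}$, the standard Higson--Kasparov--Trout functional-calculus estimate yields
\[
\bigl\|\beta_m(z)(g'_t)-\beta_m(x)(g'_t)\bigr\|\leq C_{g'}\,t^{-1}\,\bigl\|f_m(\pi(x))-f_m(\pi(z))\bigr\|.
\]
When $T^{(m)}(x,z)T'^{(m)}(z,y)\neq 0$, the common finite propagation of $T$ and $T'$ forces $d_{G_m}(x,z)\leq r$ for a uniform $r$, so $d_{G_m/N_m}(\pi(x),\pi(z))\leq r$ and therefore $\|f_m(\pi(x))-f_m(\pi(z))\|\leq\rho_2(r)$ uniformly in $m$. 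Combining this with the uniform row/column bound $L$ from condition (3) and a Schur-type operator-norm estimate on $\bigoplus_m E^m$ yields an operator-norm bound of order $t^{-1}$ on the above commutator, uniform in $m$. Asymptotic linearity and $*$-preservation are in fact exact on elementary tensors because each $\beta_m(x)$ is itself a $*$-homomorphism, and uniform boundedness $\|\beta_t(g\hat\otimes T)\|\leq\|g\|\,\|T\|$ follows from the same fact together with the canonical representation of Definition \ref{twisted Roe algebra}. The usual extension argument of \cite{HKT98,Yu00} then produces the asymptotic morphism $\beta$ on $\mathcal{S}\hat\otimes C_u^*((P_d(G_m))_{m\in\mathbb{N}})$. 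The statement for $\beta_{u,L}$ is obtained by running the same argument pointwise in $\tau$, noting that the support-control function $R(\tau)\to 0$ on the source passes directly through $\beta_t$, which does not act on $\tau$.

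The main obstacle, and essentially the only substantive difference from the single-space case in \cite{Yu00}, is the bookkeeping of uniformity in $m$: every constant appearing above ($r$, $L$, $C_{g'}$, $\rho_2(r)$, $N$, and $r_{g,t}$) must be chosen independently of $m$. This uniformity is furnished precisely by the uniform bounded geometry of $(G_m)_{m\in\mathbb{N}}$ and $(G_m/N_m)_{m\in\mathbb{N}}$, the uniform constants built into Definition \ref{algebraic twisted Roe algebra}, and the common control functions $\rho_1,\rho_2$ of the family of coarse embeddings $(f_m)_{m\in\mathbb{N}}$.
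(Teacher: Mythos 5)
Your proposal is correct and takes essentially the same approach as the paper: both reduce to the Lipschitz-type estimate $\|\beta_m(x)(g_t)-\beta_m(y)(g_t)\|\leq t^{-1}\|f_m(\pi(x))-f_m(\pi(y))\|$ via functional calculus on a generating set of $\mathcal{S}$, combined with the uniform control functions $\rho_1,\rho_2$ of the coarse embeddings and uniform bounded geometry, exactly as in Yu's Lemma 7.6. The only cosmetic differences are that the paper spells out the resolvent computation on the generators $(s\pm i)^{-1}$, the boundedness factorization $\beta_t(g\hat\otimes T)=N_{g_t}\cdot(1\hat\otimes T)$, and the passage through the maximal tensor product using nuclearity of $\mathcal{S}$, whereas you instead make the asymptotic-multiplicativity matrix computation and the verification of conditions (1)--(6) explicit.
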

\par
\begin{proof}
To show that the maps $(\beta)_{t\geq 1}$ give rise to an asymptotic morphism, we first claim that
$$\|\beta(x)(g_t)-\beta(y)(g_t)\|$$
tends to $0$ uniformly on the band
$\{(x,y)\in Z^m_d\times Z^m_d : d_{P_d(G_m)}(x,y)\leq r\}_{m\in \mathbb{N}}$ for each fixed $g\in C_0(\mathbb{R})$ and $r> 0$.
\par		
Indeed, for each $m\in \mathbb{N}, r>0$ and $x,y\in Z^m_d$ with $d_{P_d(G_m)}(x,y)\leq r$, take $n\geq 1$ such that the finite-dimensional Euclidean subspace $W^m_n(\pi(x))$ of $V$
contains $0$, $f_m(\pi(x))$ and $f_m(\pi(y))$. By Definition \ref{HKT Bott map}, we have
$$\beta_m (x)(g_t)=\Big( \beta_{V_m,W^m_n(\pi(x))}\circ\beta_{W^m_n(\pi(x))}(x) \Big) (g_t),$$
$$\beta_m (y)(g_t)=\Big( \beta_{V_m,W^m_n(\pi(x))}\circ\beta_{W^m_n(\pi(x))}(y) \Big) (g_t)$$
where
$$\beta_{V_m,W^m_n(\pi(x))}:\mathcal{A}(W^m_n(\pi(x)))\rightarrow \mathcal{A}(V_m)$$
is the $*$-homomorphism associated to the inclusion $W^m_n(\pi(x)) \subset V_m$.
Thus it suffices to deal with
$$\Big( \beta_{W^m_n(\pi(x))}(x) \Big)(g_t)- \Big(\beta_{W^m_n(\pi(x))}(y)\Big)(g_t).$$
\par		
Denote $G_x=X\hat{\otimes}1+1\hat{\otimes}C_{W^m_n(\pi(x)),\pi(x)}$ and $G_y=X\hat{\otimes}1+1\hat{\otimes}C_{W^m_n(\pi(x)),\pi(y)}$. Note that $$C_{W^m_n(\pi(x)),\pi(x)}(v)-C_{W^m_n(\pi(x)),\pi(y)}(v)=(v-f_m(\pi(x)))-(v-f_m(\pi(y)))=f_m(\pi(y))-f_m(\pi(x))$$ for all $v\in W^m_n(x)$.
For the generators $g(s)=(s\pm i)^{-1}$ of $C_0(\mathbb{R})$, and for $t\in [1, \infty)$, we have that
\begin{equation*}
	\begin{aligned}
		\left\| \Big( \beta_{W^m_n(\pi(x))}(x) \Big) (g_t)- \Big( \beta_{W^m_n(\pi(x))}(y) \Big) (g_t) \right \|
		& = \left\|\Big(t^{-1}G_x \pm i\Big)^{-1}- \Big( t^{-1}G_y\pm i \Big)^{-1} \right\| \\
		&\leq t \Big\| (G_x\pm ti)^{-1} \Big\| \|G_x-G_y\| \Big\|(G_y\pm ti)^{-1} \Big\| \\
		&\leq t^{-1}\|f_m(\pi(y))-f_m(\pi(x))\|.
	\end{aligned}
\end{equation*}
\par
Since the family of  coarse embeddings $f_m: P_d({G_m/N_m})\to H_m$ are controlled two common non-decreasing functions and the quotient map $\pi: P_d(G_m)\to P_d(G_m/N_m)$ is contractive, we know that
$$\sup\limits_{m\in \mathbb{N}}\Big\{\|f_m(\pi(x))-f_m(\pi(y))\|\, x,y \in Z_d^m,  d_{P_d(G_m)}(x,y)\leq r\Big\}<\infty.$$
Hence,
$$\left\| \Big( \beta_{W^m_n(\pi(x))}(x) \Big) (g_t)- \Big( \beta_{W^m_n(\pi(x))}(y) \Big) (g_t) \right \|$$
tends to $0$ uniformly on $\{(x,y)\in Z^m_d\times Z^m_d : d_{P_d(G_m)}(x,y)\leq r\}_{m\in \mathbb{N}}$
for $g(s)=(s\pm i)^{-1}$ as $t\rightarrow \infty $.
However, by the Stone-Weierstrass theorem, the algebra generated by these two functions is dense in
$\mathcal{S}:=C_0(\mathbb{R})$. So we complete the
proof of the claim.
\par
Let $E^m$ be the Hilbert $C^*$-module over $\mathcal{A}(V_m)\hat{\otimes}K$ as defined in Definition \ref{twisted Roe algebra}.
For every $g\in \mathcal{S}$ and $T=(T^{(m)}))_{m\in  \mathbb{N}}\in \mathbb{C}_u[(P_d(G_m))_{m\in  \mathbb{N}}]$, and for $t\in [1, \infty)$, note that we have the following formula:
$$\beta_t\big( g\hat{\otimes} T \big) = N_{g_t}\cdotp \big( 1\hat{\otimes}T \big), $$
where $N_{g_t}:\bigoplus_{m}E^m\rightarrow \bigoplus_mE^m $ is defined to be the direct sum $N_{g_t}=\bigoplus_{m} N_{g_t}^m$, in which the bounded module homomorphisms  $N_{g_t}^m$ are defined by
$$N_{g_t}^m\left( \sum_{x\in Z^m_d}a_x[x] \right)=\sum_{x\in Z^m_d}\Big( \beta(x)(g_t)\hat{\otimes}1 \Big) a_x[x],$$
and $1\hat{\otimes} T: \bigoplus_m E^m\rightarrow \bigoplus_{m} E^m$ is the bounded module homomorphism defined by
$$(1\hat{\otimes} T)\bigoplus_m \left(\sum_{x\in Z^m_d}a_x[x]\right)=\bigoplus_m\left(\sum_{y\in Z^m_d}\Big(\sum_{x\in Z^m_d}\Big(1\hat{\otimes}T(y,x)\Big)a_x[x]\Big)[y]\right)$$
for all  $\sum_{x\in Z^m_d}a_x[x]\in E^m$. Since
$$\|N_{g_t}\|= \sup\limits_{x\in Z^m_d}\Big\| \beta(x)(g_t) \Big\|\leq \|g\|,$$
we have that
$$\Big\| \beta_t(g\hat{\otimes}T) \Big\|\leq \|g\|\|T\|.$$
It follows that $\beta_t$ extends to a bounded linear map from the algebraic tensor product $\mathcal{S}\hat{\odot} C_u^*((P_d(G))_{m\in \mathbb{N}})$ to $C_u^*((P_d(G_m),\mathcal{A}(V_m))_{m\in \mathbb{N}})$. Consequently, the family of maps $(\beta_t)_{t\geq 1}$ extends to an asymptotic morphism from the
maximal tensor product $\mathcal{S}\hat{\otimes}_{max}C_u^*((P_d(G))_{m\in \mathbb{N}})$ to $C_u^*((P_d(G_m),\mathcal{A}(V_m))_{m\in \mathbb{N}})$. Since $\mathcal{S}$ is nuclear, we conclude that $(\beta_t)_{t\geq 1}$ extends to an asymptotic morphism from  $\mathcal{S}\hat{\otimes}C_u^*((P_d(G))_{m\in \mathbb{N}})$ to $C_u^*((P_d(G_m),\mathcal{A}(V_m))_{m\in \mathbb{N}})$.	
The case for the localization algebras is similar. The proof is complete.
\end{proof}
\par
Note that the asymptotic morphisms
$$\beta:\mathcal{S}\hat{\otimes}C_u^*((P_d(G))_{m\in \mathbb{N}})\rightsquigarrow C_u^*((P_d(G_m),\mathcal{A}(V_m))_{m\in \mathbb{N}}),$$
$$\beta_{u,L}:\mathcal{S}\hat{\otimes}C_{L}^*((P_d(G))_{m\in \mathbb{N}})\rightsquigarrow C_{u,L}^*((P_d(G_m),\mathcal{A}(V_m))_{m\in \mathbb{N}})$$
induce homomorphisms on $K$-theory:
$$\beta_*:K_*(\mathcal{S}\hat{\otimes}C_u^*((P_d(G))_{m\in \mathbb{N}}))\rightarrow K_*(C_u^*((P_d(G_m),\mathcal{A}(V_m))_{m\in \mathbb{N}}))$$
$$(\beta_{u,L})_*:K_*(\mathcal{S}\hat{\otimes}C_{u,L}^*((P_d(G))_{m\in \mathbb{N}}))\rightarrow K_*(C_{u,L}^*((P_d(G_m),\mathcal{A}(V_m))_{m\in \mathbb{N}})).$$
\par

Finally we are ready to complete the proof of the main result of this paper.
\par
\begin{proof}[Proof of Theorem \ref{main result}:]	
Consider the following commutative diagram:
\begin{equation*}
	\xymatrix{
			\lim\limits_{d\rightarrow\infty}K_*(\mathcal{S}\hat{\otimes}C_{u,L}^*((P_d(G))_{m\in \mathbb{N}}))\ar[r]^{e_*}\ar[d]_{(\beta_{u,L})_*}^{\cong} &
                    \lim\limits_{d\rightarrow\infty}K_*(\mathcal{S}\hat{\otimes}C_u^*((P_d(G))_{m\in \mathbb{N}}))  \ar[d]_{\beta_*} \\
			\lim\limits_{d\rightarrow\infty}K_*(C_{u,L}^*((P_d(G_m),\mathcal{A}(V_m))_{m\in \mathbb{N}}))\ar[r]^{e_*}_{\cong} &
                    \lim\limits_{d\rightarrow\infty}K_*(C_u^*((P_d(G_m),\mathcal{A}(V_m))_{m\in \mathbb{N}})).
	}
\end{equation*}
We first show that $(\beta_{u,L})_*$ is an isomorphism for any $d\geq 0$. Note that the sequence of groups $(G_m)_{m\in \mathbb{N}}$ have uniform bounded geometry and the $K$-theory of all the above localization algebras is invariant under
the strong Lipschitz homotopy equivalence. Using  Mayer-Vietoris sequence argument for uniform localization algebras and uniform twisted localization algebras and induction on the dimension of skeletons of $P_d(G_m)$ for all $m$, the general case can be reduced to the $0$-dimensional case, namely, if $D_m\subset P_d(G_m)$ for each $m\in \mathbb{N}$ is a
$\delta$-separated subset (meaning that $d_{P_d(G_m)}(x,y)>\delta$ if $x\neq y\in D_m$) for some $\delta>0$, then
$$(\beta_{u,L})_*: K_*\Big(\mathcal{S}\hat{\otimes}C_{u,L}^*((D_m)_{m\in \mathbb{N}}) \Big)\to K_*\Big(C_{u,L}^*((D_m, \mathcal{A}(V_m))_{m\in \mathbb{N}} )\Big)$$
is an isomorphism. It is clear that this can be reformulated as
$$(\beta_{u,L})_*:\prod^u\limits_{\substack{x\in D_m \\ m\in \mathbb{N}}} K_*\Big(\mathcal{S}\hat{\otimes} C_{u,L}^*\big(\{x\}\big)\Big)\rightarrow
\prod^u\limits_{\substack{x\in D_m \\ m\in \mathbb{N}}} K_*\Big(C_{u,L}^*\big(\{x\},\mathcal{A}(V_m)\big)\Big),$$
which is an isomorphism by the celebrate infinite dimensional Bott periodicity theorem of Higson, Kasparov and Trout \cite{HKT98}.
It follows that the Bott map $(\beta_{u,L})_*$ in the commutative diagram above is an isomorphism even before the inductive limit in 
$d\to \infty$ is taken.
Since both $C_u^*((P_d(G))_{m\in \mathbb{N}})$ and $C_{u,L}^*((P_d(G))_{m\in \mathbb{N}})$ are trivially graded, by taking inductive limit and recalling Theorem \ref{twisted iso},
we conclude that the evaluation map on $K$-theory
$$e_*: \lim\limits_{d\rightarrow\infty}K_*\Big(C_{u,L}^*((P_d(G))_{m\in \mathbb{N}})\Big) \rightarrow \lim\limits_{d\rightarrow\infty}K_*\Big(C_u^*((P_d(G))_{m\in \mathbb{N}})\Big)$$
is injective. Now Theorem \ref{main result} follows from Proposition \ref{evaluation}. This completes the whole proof.
\end{proof}
	
\section*{Acknowledgements}
We would like to thank Jintao Deng and Guoliang Yu for valuable suggestions and helpful discussions. We also would like to thank Alexander Engel for helpful comments.

\vskip 1cm

\begin{itemize}

\item[] Qin Wang \\
Research Center for Operator Algebras,  and Shanghai Key Laboratory of Pure Mathematics and Mathematical Practice, School of Mathematical Sciences, East China Normal University, Shanghai, 200241, P. R. China. \quad
E-mail: qwang@math.ecnu.edu.cn

\item[] Yazhou Zhang \\
Research Center for Operator Algebras, School of Mathematical Sciences, East China Normal University, Shanghai, 200241, P. R. China.\quad
E-mail: 52185500010@stu.ecnu.edu.cn

\end{itemize}


\begin{thebibliography}{99}
		
\bibitem{Arzhantseva-Osajda}
        G. Arzhantseva, and D. Osajda. Graphical small cancellation groups with the Haagerup property. (2014) arXiv: 1404.6807.	

\bibitem{Arzhantseva-Tessera2015}
        G. Arzhantseva, and R. Tessera. Relative expanders. Geom. Funct. Anal. 25 no. 2 (2015) 317--341.

\bibitem{Arzhantseva-Tessera2018/2019}
        G. Arzhantseva, and R. Tessera. Admitting a coarse embedding is not preserved under group extensions. 
        Int. Math. Res. Not. IMRN 2019, no. 20, 6480–6498.

\bibitem{BCH94}
	   P. Baum, A. Connes and N. Higson. Classifying space for proper actions and K-theory of group $C^*$-algebras. $C^*$-Algebras: 1943–1993,
        San Antonio, TX, 1993, in: Contemp. Math., vol. 167, Amer. Math. Soc., Providence, RI, 1994, pp. 240--291.

\bibitem{Braga-Chung-Li 2020}
        B. M. Braga, Y. C. Chung, and K. Li. Coarse Baum-Connes conjecture and rigidity for Roe algebras. J. Funct. Anal. 279 no. 9, (2020) 108728.
		
\bibitem{CWY15}
    X. Chen, Q. Wang and G. Yu. The coarse Novikov conjecture and Banach spaces with Property $(H)$. J. Funct. Anal., 268 (2015) 2754--2786.

\bibitem{Dadarlat-Guentner 2003}
        M. Dadarlat, and E. Guentner. Construncions preserving Hilbert space uniform embeddability of discrete groups. Trans. Amer. Math. Soc. 355, no. 8 (2003): 3253--75.

\bibitem{Delabie-Khukhro}
        T. Delabie, and A, Khukhro. Box spaces of the free group that neither contain expanders nor embed into a Hilbert space. Adv. Math. 336 (2018) 70--96.

\bibitem{Deng20}
    J. Deng. The Novikov conjecture and extensions of coarsely embeddable groups. J. Noncommutat. Geom. (2020), forthcoming: to appear in JNCG.    		
\bibitem{DWY21}
	J. Deng, Q. Wang and G. Yu. The coarse Baum--Connes conjecture for certain group extensions and relative expanders. arXiv:2102.10617.

\bibitem{Gong-Wang-Yu08}
        G. Gong, Q. Wang, G. Yu. Geometrization of the strong Novikov conjecture for residually finite groups. J. Reine Angew. Math. 621 (2008), 159--189.
        
\bibitem{Gong-Wu-Yu20}
        S. Gong, J. Wu, and G. Yu, The Novikov conjecture, the group of volume preserving diffeomorphisms and Hilbert-Hadamard spaces. to appear in Geom. Funct. Anal. 2021.
		
\bibitem{Grom93}
	M. Gromov, {\em Asymptotic invariants for infinite groups}, London Math. Soc. Lecture Notes Ser., vol. 182,
        Academic Press, New York, 1993, pp. 1--295.

\bibitem{Grom03}
		M. Gromov. Random walks in random groups. Geom. Funct. Anal. 13(1) (2003) 73--146.
		
\bibitem{Guentner-Kaminker 2004}
        E. Guentner, J. Kaminker. Geometric and analytic properties of groups. In: {\em Noncommutative Geometry}, edited by S. Doplicher and R. Longo. Lecture Notes Math. 1831 (2004): 253-62.


\bibitem{HR93}
	N. Higson, J. Roe. On the coarse Baum--Connes conjecture, in: S. Ferry, A. Ranicki, J. Rosenberg (Eds.), Proc. 1993 Oberwolfach Conference
        in the Novikov Conjecture. London Math. Soc. Lecture Notes Ser., vol. 227, Cambridge Univ. Press, Cambridge, 1995, pp. 227--254.
		
\bibitem{HKT98}
	N. Higson, G. Kasparov and J. Trout. A Bott periodicity theorem for infinite dimensioal Euclidean space. Adv. Math., 135 (1998), no. 1, 1--40.
		
\bibitem{Kas75}
	G. Kasparov. Topological invariants of elliptic operators I: $K$-homology. Mathematics of the USSR, 751--792, 1975.		

\bibitem{KY06}
	G. Kasparov, G. Yu. The coarse geometric Novikov conjecture and uniform convexity. Adv. Math. 206 (1) (2006) 1--56.
		
\bibitem{KY12}
	G. Kasparov, G. Yu. The Novikov conjecture and geometry of Banach spaces. Geom. Topol. 16 (2012) 101--999.
		
\bibitem{Nowak-Yu 2012}
        P. Nowak, and G. Yu. Large space geometry. Z\"{u}rich: European Mathematical Society (EMS), 2012.

\bibitem{Osajda20}
        D. Osajda. Small cancellation labellings of some infinite graphs and applications. Acta Math. 225 (2020), 159--191.

\bibitem{Oyono-Oyono 2001}
        H. Oyono-Oyono. Baum-Connes conjecture and extensions. J. Reine Angew. Math. 532 (2001) 133-149.

\bibitem{OOY-09}
        H. Oyono-Oyono, and G. Yu. $K$-theory for the maximal Roe algebras of certain expanders. J. Funct. Anal. (10) 257 (2009), 3239--3292.

\bibitem{Roe93}
	J. Roe. Coarse cohomology and index theory on complete Riemannian manifolds. Mem. Amer.Math. Soc. 104 (497)(1993), x+90 pp.
	
\bibitem{Roe96}
	J. Roe. Index Theory, Coarse Geometry, and Topology of Manifolds. CBMS Conf. Proceedings, vol. 90, Amer. Math. Soc., Providence, RI, 1996.
		
\bibitem{ShanWang06}
	L. Shan and Q. Wang. The coarse geometric Novikov conjecture for subspaces of non--positively curved manifolds.
        J. Funct. Anal., 248 (2007), no. 2, 448--471.
		
\bibitem{WY20}
	R. Willett and G. Yu. Higher index theory. Cambridge University Press, 2020.
		
\bibitem{Yu95}
	G. Yu. Coarse Baum--Connes conjecture. $K$-Theory, 9 (3) (1995) 199--221.
	
\bibitem{Yu95Baun-Connes}
G. Yu. Baum--Connes Conjecture and Coarse Geometry.  $K$-Theory, 9 ( 1995) 223-231.
		
\bibitem{Yu97}
	G. Yu. Localization algebras and the coarse Baum--Connes conjecture.  $K$-Theory, 11 (1997) 307--318.


\bibitem{Yu98}
    G. Yu, The Novikov conjecture for groups with finite asymptotic dimension. Ann. of Math. (2) 147 (1998), no. 2, 325–355.
		
\bibitem{Yu00}
	G. Yu. The coarse Baum--Connes conjecture for spaces which admit a uniform embedding into Hilbert space. Invent. Math., 139 (2000) 201--240.
		
\bibitem{Yu06}
	G. Yu. Higher index theory of elliptic operators and geometry of groups. Proceedings of International Congress of Mathematicians,
        Madrid, 2006, vol. II, pp. 1623--1639.

\end{thebibliography}
\end{document}